\numberwithin{equation}{section}
\newcommand{\arxiv}[1]{ArXiV preprint #1}
\def\tilde{\widetilde}
\renewcommand\hat{\widehat}
\def\ZZ{{\mathbb Z}}
\def\RR{{\mathbb R}}
\def\TT{{\mathbb T}}
\def\eps{\varepsilon}
\def\curl{\mathop\textrm{curl} \nolimits}
\def\ddiv{\mathop\textrm{div} \nolimits}
\theoremstyle{plain}
\newtheorem{theorem}{Theorem}[section]
\newtheorem{lemma}[theorem]{Lemma}
\newtheorem{conjecture}[theorem]{Conjecture}
\theoremstyle{definition}
\newtheorem{definition}[theorem]{Definition}
\newtheorem{remark}[theorem]{Remark}
\newcommand{\norm}[1]{\left\Vert#1\right\Vert}
\newcommand*{\Id}{\ensuremath{\mathrm{I}_d}}
\newcommand*{\Itwo}{\ensuremath{\mathrm{I}_2}}
\newcommand{\les}{\lesssim}
\begin{document}

\title[Anomalous diffusion via iterative quantitative homogenization]{Anomalous diffusion via iterative quantitative homogenization:\\ an overview of the main ideas}

\author[S.~Armstrong]{Scott Armstrong}
\address{Courant Institute of Mathematical Sciences, New York University, New York, NY 10012, USA.}
\email{\href{scotta@cims.nyu.edu}{scotta@cims.nyu.edu}}
\address{Laboratoire Jacques-Louis Lions, Sorbonne University, Paris, 75005, France.}
\email{\href{scott.armstrong@sorbonne-universite.fr}{scott.armstrong@sorbonne-universite.fr}}

\author[V.~Vicol]{Vlad Vicol}
\address{Courant Institute of Mathematical Sciences, New York University, New York, NY 10012, USA.}
\email{\href{vicol@cims.nyu.edu}{vicol@cims.nyu.edu}}

%

\begin{abstract}
Anomalous diffusion is the fundamental ansatz of phenomenological theories of passive scalar turbulence, and has been confirmed numerically and experimentally to an extraordinary extent. The purpose of this survey is to discuss our recent result, in which we construct a class of incompressible vector fields that have many of the properties observed in a fully turbulent velocity field, and for which the associated scalar advection-diffusion equation generically displays anomalous diffusion. Our main contribution is to propose an analytical framework in which to study anomalous diffusion via a backward cascade of renormalized eddy viscosities.
\hfill \today
\end{abstract}

\maketitle


\section{Introduction}

Let $u = u(t,x) \colon [0,\infty) \times \mathbb{T}^d \to \mathbb{R}^d$ be a given divergence-free velocity field, 
which is $\mathbb{T}^d$-periodic in space. Here $d\geq 2$ is the space dimension. We are interested in the setting where $u$ is continuous (in space and time), but has {\em low regularity}; for example, $u\in C^{\alpha}_{x,t}:= C^0_t C^\alpha_x \cap C^\alpha_t C^0_x$ for some $\alpha \in (0,1)$.  
The motivation to consider such vector fields stems from phenomenological theories of fully-developed hydrodynamic turbulence~\cite{Obukhov,Corrsin,Batchelor,Frisch,MK,shraiman,Warhaft,DSY}. The H\"older regularity exponent $\alpha = \frac 13$ is singled out by the Onsager theory of {\em ideal turbulence}~\cite{EyinkSreeni,EyinkSolo}, and this exponent plays a prominent role in this manuscript.

For such a given incompressible vector field $u \in C^\alpha_{x,t} $, we consider the advection-diffusion equation
\begin{equation}
\label{eq:drift:diffusion}
\partial_t \theta^\kappa + u \cdot \nabla \theta^\kappa 
= \kappa \Delta \theta^\kappa \,,
\qquad 
\theta^\kappa |_{t=0} = \theta_{\mathsf{in}} \,,
\end{equation}
which models the evolution of a {\em passive scalar} field $\theta^\kappa = \theta^\kappa(t,x) \colon [0,\infty) \times \mathbb{T}^d \to \mathbb{R}$ in a background fluid with velocity $u$. The parameter  $\kappa>0$ represents molecular diffusivity, or, in non-dimensional form, an inverse P\'eclet number. Since we are interested in the behavior of the fields $\theta^\kappa$ as $\kappa\to 0$, it is sufficient to consider $\kappa \in (0,1]$. 
The initial condition in \eqref{eq:drift:diffusion} is given by $\theta_{\mathsf{in}}= \theta_{\mathsf{in}}(x)$, which is taken to have zero-mean on $\TT^d$ (that is, $\fint_{\TT^d} \theta_{\mathsf{in}} dx = 0$), and is assumed to be sufficiently smooth (for instance, $\theta_{\mathsf{in}} \in H^1(\TT^d)$). The advection-diffusion equation~\eqref{eq:drift:diffusion} is to be solved for $t\geq0$; for simplicity, here we will only consider  $t\in[0,1]$. We note that if $u \in C^\alpha_{x,t}$ with $\alpha \in (0,1)$, then for any $\kappa>0$ the solution $\theta^\kappa$ of~\eqref{eq:drift:diffusion} is \emph{uniquely defined},\footnote{At lower regularity than the one considered in this paper, it is possible to construct non-unique weak solutions of the advection-diffusion equation~\eqref{eq:drift:diffusion}. This was first achieved in~\cite{MSz18,MSz19,cheskidov2021}, by treating the diffusion term as an error in an ``intermittent convex-integration scheme'' for the transport equation; this idea was introduced in~\cite{BV19,BCV22} for the 3D Navier-Stokes equations. In the works~\cite{MSz18,MSz19,cheskidov2021}, the velocity field $u$ and the scalar field $\theta^\kappa$ are constructed simultaneously, resulting in low regularity for both. See also the recent work~\cite{MS24} for a non-uniqueness proof based on stochastic
Lagrangians.} and experiences parabolic smoothing: \emph{in positive time}, we have $\theta^\kappa \in C^0_t C^{2,\alpha}_x \cap C^{1,\alpha/2}_t C^0_x$.  Also, note that the solution $\theta^\kappa$ continues to have zero-mean in positive time, namely $\fint_{\TT^d} \theta^\kappa(t,x) dx = 0$ for all $t\in (0,1]$.

\subsection{A rigorous definition of anomalous diffusion}
The fundamental {\em energy balance} for the advection-diffusion equation is derived by  multiplying~\eqref{eq:drift:diffusion} with $\theta^\kappa$, and using that $\ddiv u = 0$ to write
\begin{equation*}
\partial_t \bigl(\tfrac 12 |\theta^\kappa|^2\bigr)
+ \ddiv \bigl(u \; \tfrac 12 |\theta^\kappa|^2  \bigr) - \kappa  \Delta \bigl( \tfrac 12 |\theta^\kappa|^2 \bigr) 
+ \kappa |\nabla \theta^\kappa|^2
=0.
\end{equation*}
Integrating the above relation on $[0,t] \times \TT^d$ yields the energy balance\footnote{The quantity $\| \theta^\kappa\|_{L^2(\TT^d)}^2$  is the passive scalar's {\em variance} (the mean of $\theta^\kappa$ is fixed to equal zero), but we may also think about it as an {\em energy}.}
\begin{equation}
\tfrac 12 \norm{\theta^\kappa(t,\cdot)}_{L^2(\TT^d)}^2
+
\kappa \int_0^t \norm{\nabla \theta^\kappa(t^\prime,\cdot)}_{L^2(\TT^d)}^2  dt^\prime
=
\tfrac 12 \norm{\theta_{\mathsf{in}}}_{L^2(\TT^d)}^2
.
\label{eq:theta:balance}
\end{equation}
We emphasize that~\eqref{eq:theta:balance} may be justified rigorously (with $=$ sign)  under a very weak assumption on the incompressible velocity field $u$, namely $u \in L^\infty_t L^d_x$.

The uniform-in-$\kappa$ a priori estimate on $\theta^\kappa$ in $L^\infty_t L^2_x$ provided by \eqref{eq:theta:balance} shows that sequences $\{ \theta^{\kappa_j}\}_{j\geq 0}$, with $\kappa_j \to 0$ as $j\to \infty$, have sub-sequential weak-* limits in $L^\infty_t L^2_x$. Moreover, the linearity of \eqref{eq:drift:diffusion} implies that any such weak-* limit point $\bar \theta$ is a $L^\infty_t L^2_x$ weak solution of the transport equation
\begin{equation}
\label{eq:transport}
 \partial_t \bar \theta + \ddiv (u \bar \theta) = 0\,,
 \qquad 
 \bar \theta |_{t=0} = \theta_{\mathsf{in}} \, .
\end{equation}
Since  under weak convergence we have $\|\bar \theta(t,\cdot)\|_{L^2(\TT^d)} \leq \liminf_{\kappa\to0} \|\theta^\kappa(t,\cdot)\|_{L^2(\TT^d)}$ for $t\in [0,1]$, with~\eqref{eq:theta:balance} we obtain
\begin{align}
0 
\leq
\limsup_{\kappa \to 0} \kappa \int_0^t \norm{\nabla \theta^\kappa(t^\prime,\cdot)}_{L^2(\TT^d)}^2  dt^\prime
&= 
\tfrac 12 \norm{\theta_{\mathsf{in}}}_{L^2(\TT^d)}^2
- 
\liminf_{\kappa \to 0} \tfrac 12 \norm{\theta^\kappa(t,\cdot)}_{L^2(\TT^d)}^2
\notag\\
&\leq 
\tfrac 12 \norm{\theta_{\mathsf{in}}}_{L^2(\TT^d)}^2
- 
\tfrac 12 \Vert \bar \theta(t,\cdot)\Vert_{L^2(\TT^d)}^2
\label{eq:transport:dissipation}
\end{align}
for $t\in [0,1]$. The right side of \eqref{eq:transport:dissipation} depends only on the solution $\bar \theta$ of the transport equation~\eqref{eq:transport} which arose in the weak-* limit.\footnote{For vector fields $u$ whose regularity lies {\em below} $L^1_t W^{1,\infty}_x$,  e.g.~for $u\in L^1_t C^\alpha_x$ with $\alpha < 1$, the transport equations are expected to have non-unique solutions. Different subsequences $\kappa_j \to 0$ could in principle select different such solutions (see Theorem~\ref{thm:main} below). Therefore, \eqref{eq:transport:dissipation} should be written in terms of sequences $\kappa_j\to 0$, and for a {\em given} weak-* limit $\bar \theta$.} The bound on the energy dissipation rate obtained in \eqref{eq:transport:dissipation} shows that if {\em all $L^\infty_t L^2_x$ weak solutions} of the transport equation \eqref{eq:transport} have constant-in-time $L^2(\TT^d)$-norm, then the right side of \eqref{eq:transport:dissipation} vanishes identically, and thus  $ \limsup_{\kappa \to 0} \kappa \int_0^t \|\nabla \theta^\kappa(t^\prime,\cdot)\|_{L^2(\TT^d)}^2 dt^\prime  = 0$. 
Such a situation occurs for instance if the solution of \eqref{eq:transport} is uniquely obtained via ODE theory under the Cauchy-Lipschitz assumption $u \in L^1_t W^{1,\infty}_x$. At a lower space integrability of the velocity gradient, the Di Perna-Lions theory~\cite{diperna1989} guarantees that if $u \in L^1_t W^{1,1}_x$, then all $L^\infty_t L^2_x$ weak solutions of \eqref{eq:transport} are {\em renormalized} and thus they conserve their energy as a function of time.\footnote{See also the work of Ambrosio~\cite{Ambrosio04} for $u\in L^1_t \textrm{BV}_x$.}

The above discussion shows that if $\nabla u \in L^1_{x,t}$, then by the Di Perna-Lions theory we automatically have $\limsup_{\kappa \to 0} \kappa \int_0^t \| \nabla \theta^\kappa (t^\prime,\cdot)\|_{L^2(\TT^d)}^2 dt^\prime  = 0$ for $t\in [0,1]$. Turbulent flows $u$ however do not posses the property that $\nabla u \in L^1_{x,t}$; instead, we expect $u$ to have Onsager supercritical regularity, such as $u \in C^\alpha_{x,t}$ for some $\alpha\leq \frac 13$ (see~\cite{Frisch,EyinkSreeni,Buck2020,EyinkSolo} and the references therein). This leaves open the possibility that the right side of \eqref{eq:transport:dissipation} does not vanish identically, motivating the following definition:
\begin{definition}[Anomalous diffusion]
\label{def:ad}
Fix an incompressible vector field $u \in C^0_{x,t}$. 
If for any $\theta_{\mathsf{in}} \in H^1(\TT^d)$ with $\fint_{\TT^d} \theta_{\mathsf{in}}(x) dx=0$, there exists $0< \varrho \leq \frac 12$ such that the family of unique solutions $\{\theta^\kappa\}_{\kappa>0}$ of the Cauchy problem~\eqref{eq:drift:diffusion} satisfy
\begin{equation}
\limsup_{\kappa \to 0} \kappa \int_0^1 \!\!\! \int_{\TT^d} |\nabla \theta^\kappa(t^\prime,x) |^2 dx dt^\prime
\geq 
\varrho \|\theta_{\mathsf{in}}\|_{L^2(\TT^d)}^2
,
\label{eq:anomaly}
\end{equation}
we say the the advection-diffusion equation associated to $u$ exhibits {\em anomalous diffusion} on $[0,1]$; equivalently, {\em anomalous dissipation of the passive scalar's variance}.\footnote{For passive scalar transport, anomalous diffusion~\eqref{eq:anomaly} is equivalent to Lagrangian spontaneous stochasticity~\cite{DrivasEyink17}. Roughly speaking, this means that backwards-in-time transition probabilities of the SDE used to represent the advection-diffusion equation, do not converge as $\kappa\to 0$ to a Dirac mass around a deterministic Lagrangian trajectory.}
\end{definition}
 
The anomalous diffusion associated to the time interval $[0,1]$, as defined in Definition~\ref{def:ad}, may equivalently be introduced for any time interval $[0,t]$ with $t \in (0,1]$. Moreover, given any sequence $\kappa_n \to 0$, one may find a further subsequence $\kappa_{n_j}\to 0$ along which the {\em time dissipation measure} $\mathcal{E}(dt)$ is the weak-* limit in the sense of bounded measures
\begin{equation}
 \label{eq:dissip:measure}
\mathcal{E}(dt) := \lim_{j \to \infty} \kappa_{n_j} \norm{\nabla \theta^{\kappa_{n_j}} (t,\cdot)}_{L^2(\TT^d)}^2 
\,.
\end{equation}
Here we abuse notation, as clearly $\mathcal{E}$ also depends on the initial datum $\theta_{\mathsf{in}}$, and also on the chosen sub-sequence $\{\kappa_{n_j}\}_{j\geq 0}$. With \eqref{eq:transport:dissipation} and~\eqref{eq:dissip:measure}, relation~\eqref{eq:anomaly} becomes $\varrho \|\theta_{\mathsf{in}}\|_{L^2(\TT^d)}^2 \leq \int_0^1 \mathcal{E}(dt) \leq \frac 12 \|\theta_{\mathsf{in}}\|_{L^2(\TT^d)}^2$.

\subsection{Predictions from hydrodynamic turbulence}
\label{sec:OC}
It is widely accepted that if the vector field $u$ represents a homogenous isotropic turbulent velocity field, then solutions of~\eqref{eq:drift:diffusion}  exhibit anomalous diffusion in the sense of Definition~\ref{def:ad}. This prediction was first discussed by Obukhov~\cite{Obukhov}, who drew direct analogies to the anomalous dissipation of kinetic energy in a turbulent fluid, postulated by Kolmogorov's 1941 phenomenological theory~\cite{Kolmogorov41a}.  Compelling numerical and experimental evidence for anomalous diffusion is presented in~\cite{SreeniOld,shraiman,Warhaft,DSY}. In this setting, one expects that the time dissipation measure $\mathcal{E}(dt)$ is non-atomic, and is maybe even absolutely continuous with respect to Lebesgue measure on $[0,1]$.

Under the ansatz that the advection-diffusion equation~\eqref{eq:drift:diffusion} associated to $u$ exhibits anomalous diffusion (a so-called ``experimental fact''), and assuming that $u$ represents a homogenous isotropic turbulent velocity field exhibiting {\em monofractal scaling in the inertial range}, Obukhov~\cite{Obukhov} and Corrsin~\cite{Corrsin} have furthermore predicted that the sequence $\{\theta^\kappa\}_{\kappa>0}$ retains uniform-in-$\kappa$ H\"older regularity, with exponents which may be determined from scaling. 

In modern mathematical terms, in analogy with the Onsager conjecture for the incompressible Euler~\footnote{A weak solution $(u,p) \in L^2_{x,t} \times L^1_{x,t}$ of the incompressible Euler equations solves the system $\partial_t u + \ddiv (u\otimes u) + \nabla p=0$, with $\ddiv u = 0$, in the sense of space-time distributions.} equations~\cite{Eyink94,CET,EyinkSreeni,DLSzReview,DLSz13,Buck2020},\footnote{For the incompressible Euler equations, the Onsager dichotomy is a ``Theorem'', except for the most important endpoint case $\alpha = \frac 13$; see~\cite{CCFS,DuchonRobert} for the sharpest {\em rigidity} statements, and~\cite{Isett,BDLSZV,NovackVicol,GiriRadu,GiriKwonNovack} for the latest {\em flexibility} statements in the Onsager Theorem.} one may propose a {\em mathematical idealization} of the Obukhov-Corrsin prediction, and postulate the dichotomy:
\begin{enumerate}
\item[(i)] If~$u \in C^\alpha_{x,t}$ for some $\alpha \in (0,1)$ is divergence-free, and if the family of solutions~$\{\theta^\kappa\}_{\kappa>0}$ of \eqref{eq:drift:diffusion} are uniformly-in-$\kappa$ bounded in~$L^2_t C^{\bar \alpha}_x$ for some~$\bar \alpha > \frac{1-\alpha}{2}$, then there is no anomalous diffusion, namely $\lim_{\kappa\to 0} \kappa \|\nabla \theta^\kappa\|_{L^2_t L^2_x}^2 = 0$.
\item[(ii)] For each $\alpha \in (0,\frac 13)$,\footnote{The reason that we do not include here the full range $\alpha \in (0,1)$ is discussed in \S\ref{sec:open} below. } there exists divergence-free $u \in C^\alpha_{x,t}$, such that anomalous diffusion holds in the sense of Definition~\ref{def:ad}, with a non-atomic time-dissipation measure $\mathcal{E}(dt)$. Moreover, for~\emph{all} smooth initial conditions~$\theta_{\mathsf{in}}$, the solutions~$\{\theta^\kappa\}_{\kappa>0}$ of~\eqref{eq:drift:diffusion} are uniformly-in-$\kappa$ bounded in~$L^2_t C^{\bar\alpha}_x$ for~$\bar\alpha< \frac{1-\alpha}{2}$. 
\end{enumerate}
Part~(i) of the Obukhov-Corrsin  dichotomy is known, and follows directly from the commutator estimate of Constantin, E, and Titi~\cite{CET} (see e.g.~\cite{DEIJ} or~\cite{CCS22}).\footnote{Specifically, one may modify the mollification+commutator argument of~\cite{CET} to establish the bound $
\kappa \int_0^1 \norm{\nabla \theta^\kappa(\cdot,t)}_{L^2}^2 dt
\les
\kappa^{\frac{\alpha+2\bar \alpha-1}{\alpha+1}}
 (1 + \norm{u}_{L^\infty_t C^\alpha_x} )^{\frac{2(1-\bar \alpha)}{\alpha+1}}
\norm{\theta^\kappa}_{L^\infty_t C^{\bar \alpha}_x}^2
$.} As stated, part~(ii) of the dichotomy remains open (see \S\ref{sec:previous:results} for partial progress). The main purpose of our work~\cite{AV24} is to provide a new perspective on anomalous diffusion for the passive scalar equation, in the hope that these ideas will lead to a resolution of part (ii) of the Obukhov-Corrsin  dichotomy.

\subsection{Previous results} 
\label{sec:previous:results}
From a rigorous mathematical perspective, prior to our work~\cite{AV24}, we are aware of only two  (classes of) examples  of incompressible vector fields $u$ for which the associated passive scalar equation~\eqref{eq:drift:diffusion} exhibits (a form of) anomalous diffusion~\eqref{eq:anomaly}: the first is the stochastic Kraichnan model (introduced by Kraichnan in~\cite{K1}), while the second is a class of deterministic singularly-focusing mixing flows (introduced by Drivas, Elgindi, Iyer, and Jeong in~\cite{DEIJ}). We next discuss these examples, but do so {\em exceedingly briefly}. For a complete list of references and appropriate context, we refer the interested readers to the introductions of~\cite{AV24,DrivasEyink17,DEIJ,EyinkSolo,Leipzig,MFN}.
 
Kraichnan proposed in~\cite{K1} a model in which $u = u^\nu$ is a realization of a homogenous isotropic stationary zero-mean Gaussian random field, which is delta-correlated in time, and is colored in space in order to match the Kolmogorov 1941 phenomenological theory: the space covariance is fully-smooth at length scales $\lesssim \nu^{\frac 12}$ (a UV cutoff serving as a proxy for the Kolmogorov dissipation scale), and behaves like a $C^\alpha$ function at length scales $\gtrsim \nu^{\frac 12}$ (a synthetic power spectrum in the inertial range). For such a stationary Gaussian random field $u^\nu$, instead of~\eqref{eq:drift:diffusion} one studies the Stratonovich SDE $d \theta^{\kappa,\nu} + u^\nu \odot d \theta^{\kappa,\nu}  = \kappa \Delta \theta^{\kappa,\nu} dt$. 
The main result  is that in the joint limit $\nu,\kappa \to0$, and for a.e.~realization of the field $u^\nu$, and any $\theta_{\mathsf{in}} \in L^2(\TT^d)$ of zero-mean, anomalous diffusion holds; see~Bernard, Gawedski, and Kupiainen~\cite{Bernard}, Le Jan and Raimond~\cite{lejan2002}, or Falkovich, Gawedski, and Vergassola~\cite{falkovich2001}. A fully rigorous, complete, and concise proof of anomalous diffusion for the Kraichnan model (and its variants) may be found in the recent work of Rowan~\cite{Keefer}.  

The main drawback of the Kraichnan model stems from the white-noise temporal correlation of the vector field $u$, which is indeed so rough that it is likely entirely responsible for the anomalous diffusivity; see the mathematical discussion in~\cite{Keefer} and the numerical comparisons made in~\cite{sreenivasan2010}.

The first deterministic vector field $u$ for which the dissipation anomaly~\eqref{eq:anomaly} can be established rigorously, was constructed by Drivas, Elgindi, Iyer, and Jeong~\cite{DEIJ}. These authors show that for any $\alpha \in [0,1)$ and any dimension $d\geq 2$, there exists $u \in L^1([0,1];C^\alpha(\TT^d)) \cap L^\infty([0,1];L^\infty(\TT^d))$, such that $u(t,\cdot)$ is smooth for any $t<1$, and such that for any $\theta_{\mathsf{in}} \in H^2(\TT^d)$ which is sufficiently close (in $L^2$) to a an eigenfunction of the Laplacian, anomalous diffusion~\eqref{eq:anomaly} holds, with a purely-atomic time dissipation measure $\mathcal{E}(dt) = \rho(\theta_{\mathsf{in}}) \delta_{t=1}$. The construction in~\cite{DEIJ} is based on 
``quasi-self-similarly'' (as $t\to 1^-$)  accelerating an inviscid mixing dynamics~\cite{ACM1},  and treating the diffusion term as a perturbation. This is made possible by the singular nature of the ``speed-up'' of $u$ with respect to time, and the fact that at each time $t<1$ the field $u$ has only one ``active scale'' with respect to space. 

The ideas in~\cite{DEIJ} proved to be quite popular, leading to a number of further refinements~\cite{BrueDeLellis22,CCS22,JohanssonSorella2023,Cheskidov23,ElLi2023,JohanssonSorella2024,KeeferElias}. Notably, Colombo, Crippa, and Sorella~\cite{CCS22} gave examples of a smooth initial conditions $\theta_{\mathsf{in}}$, and of a divergence-free vector field $u \in C^0_t C^\alpha_x$ (for any $\alpha<1$), such that the associated family of solutions $\{ \theta^\kappa\}_{\kappa>0}$ is uniformly-in-$\kappa$ bounded in $L^2_t C^{\bar \alpha}_x$ for any $\bar \alpha < \frac{1-\alpha}{2}$, consistent with the Obukhov-Corrsin dichotomy; moreover, \eqref{eq:anomaly} holds with $\mathcal{E}(dt) = \rho(\theta_{\mathsf{in}}) \delta_{t=1}$, and sub-sequential limits  select different weak solutions of the transport equation.\footnote{At the lower regularity $u \in L^\infty_{x,t}$, the vanishing viscosity
limit fails to select not just unique solutions of the transport equation, but also physically admissible ones, in the sense of non-increasing energy/entropy.~\cite{Titi}}

This result was further refined by Elgindi and Liss~\cite{ElLi2023}, who for any $\alpha \in(0,1)$ provide a divergence-free vector field $u \in C^\infty([0,1]; C^\alpha(\TT^d))$ such that for {\em all} zero-mean and smooth initial conditions $\theta_{\mathsf{in}}$ (instead of just {\em one initial condition}) anomalous diffusion holds with $\mathcal{E}(dt) = \rho(\theta_{\mathsf{in}}) \delta_{t=1}$; moreover, $\{ \theta^\kappa\}_{\kappa>0}$ is uniformly-in-$\kappa$ bounded in $L^2_t C^{\bar \alpha}_x$ for any $\bar \alpha < \frac{1-\alpha}{2}$. We also mention the work of Johannson-Sorella~\cite{JohanssonSorella2024} who construct an {\em autonomous} divergence-free velocity field $u \in C^\alpha_{x}$ and a smooth zero-mean initial condition $\theta_{\mathsf{in}}$, such that~\eqref{eq:anomaly} holds; the associated time dissipation measure $\mathcal{E}(dt)$ is purely absolutely continuous  w.r.t. the Lebesgue on $[0,1]$. Lastly, we note the recent paper of Hess-Childs and Rowan~\cite{KeeferElias}  who for any $\alpha \in (0,1)$ construct $u \in C^0_t C^\alpha_x \cap C^{\frac{\alpha}{1-\alpha}}_t C^0_x$ such that for any zero-mean $\theta_{\mathsf{in}} \in L^1(\TT^2)$, the passive scalar $\theta^\kappa$ exhibits {\em asymptotic total dissipation} as $\kappa\to 0$, at $\{t=1\}$; that is,  $\lim_{\kappa \to 0} \|\theta^\kappa(1,\cdot)\|_{L^1(\TT^2)} = 0 $.

The main drawbacks of the constructions in~\cite{DEIJ,CCS22,ElLi2023} are that the anomalous dissipation occurs at only a discrete set of times ($\mathcal{E}(dt)$ is proportional to $\delta_{t=1}$), the vector field $u$ has  only one active scale at each time $t \in[0,1)$, and the advection-diffusion equation~\eqref{eq:drift:diffusion} is treated as a perturbation of the transport equation~\eqref{eq:transport}. Taken together, these properties are inconsistent with the observed properties of turbulent flows.

Note that the result in~\cite{ElLi2023} \emph{nearly solves} part (ii) of the Obukhov-Corrsin dichotomy discussed above, except for the singular nature of the time dissipation measure. In~\S\ref{sec:open} we argue that this seemingly minor caveat (the atomic nature of $\mathcal{E}(dt)$), combined with the fact that anomalous diffusion is meant to hold for all smooth zero-mean initial conditions $\theta_{\mathsf{in}}$, is in fact fundamental to the Obukhov-Corrsin dichotomy, and we present a conjecture in this direction (see Conjecture~\ref{eq:big:conj}).

\subsection{Main result}
Our goal in~\cite{AV24} was to build ``minimally realistic examples'' of ``fluid-like'' velocity fields $u$, for which anomalous diffusion holds in the sense of Definition~\ref{def:ad},\footnote{In particular,~\eqref{eq:anomaly} holds for {\em all} zero-mean smooth initial conditions.} and such that the time dissipation measure $\mathcal{E}(dt)$ defined in~\eqref{eq:dissip:measure} is {\em non-atomic}.\footnote{In particular, the anomalous dissipation occurs continuously in time.} 

From a broader perspective, our goal was to highlight what we believe to be the ``right reason'' for which anomalous diffusion holds: it is not that the underlying transport dynamics exhibits {\em mixing}, which is accelerated towards a discrete set of times; rather, we show that as one ``zooms out'' from small scales to large scales, one may observe a backwards cascade of renormalized ``eddy diffusivities'' taking shape, in such a way that the large scale motion experiences $O(1)$ diffusivity independently of the vanishing molecular diffusivity (as $\kappa \to 0$). Of course, this picture is not at all original and is indeed widely shared by physicists, where it goes under the name \emph{renormalization group}:~see for instance Frisch~\cite[Section 9.6]{Frisch}.
Our main result is:

\begin{theorem}[Theorem 1.1 in~\cite{AV24}]
\label{thm:main} 
Fix $d\geq 2$ and $\alpha \in (0,\frac 13)$.\footnote{The restriction $\alpha<\frac 13$ is intimately related to Conjecture~\ref{eq:big:conj} below.} There exists an incompressible vector field $u\in C^0([0,1]; C^\alpha(\TT^d))  \cap C^\alpha([0,1];C^0(\TT^d))$, such that for every $\theta_{\mathsf{in}} \in \dot{H}^1(\TT^d)$, the family of solutions $\{\theta^\kappa\}_{\kappa>0}$ of the advection-diffusion equation~\eqref{eq:drift:diffusion} with velocity $u$, exhibits anomalous diffusion in the sense of Definition~\ref{def:ad}, with a non-atomic time-dissipation measure. Moreover, we have:
\begin{itemize}[leftmargin=*]
\item The parameter $\varrho \in (0,\frac 12]$ appearing in~\eqref{eq:anomaly} depends only on $\alpha$, $d$, and a lower bound on the natural length scale of the initial datum, given by $L_{\mathsf{in}}:= \frac{\|\theta_{\mathsf{in}}\|_{L^2}}{\|\nabla \theta_{\mathsf{in}}\|_{L^2}}$. Specifically, for any $\eps>0$, we may find $c = c(d,\alpha,\eps)>0$ such that 
$\varrho = c  L_{\mathsf{in}}^{\frac{2(1+\alpha)}{1-\alpha}+\eps}$.
\item There exists a sequence $\kappa_j \to 0$, which depends on $\alpha$ and $d$ but is independent of $\theta_{\mathsf{in}}$, along which the lower bound in~\eqref{eq:anomaly} is realized; upon defining $\mathcal{K} = \cup_{j\geq 1} [\frac 12 \kappa_j, 2\kappa_j]$,  we have $\inf_{\kappa \in \mathcal{K}} \kappa \|\nabla \theta^\kappa\|_{L^2((0,1)\times\TT^d)}^2 \geq \varrho \|\theta_{\mathsf{in}}\|_{L^2(\TT^d)}^2$. 
\item There exists $\mu>0$, such that for any sequence $\kappa_i \to 0$ with $\{ \kappa_i\}_{i\geq 1} \subset \mathcal{K}$, the sequence of solutions $\{\theta^{\kappa_i}\}_{i\geq 1}$ is uniformly-in-$\kappa_i$ bounded in $C^{0,\mu}([0,1];L^2(\TT^d))$, as $\kappa_i \to 0$. In particular, the anomalous dissipation occurs continuously in time: along this subsequence we have that  $\int_0^t \mathcal{E}(dt^\prime)$ is H\"older continuous in $t \in [0,1]$. See~\cite[Remark~5.4]{AV24}.
\item There exists sequences $\{ \kappa_i\}_{i\geq 1}, \{\kappa_i^\prime\}_{i\geq 1} \subset \mathcal{K}$ such that as $\kappa_i,\kappa_i^\prime\to0$, the sequences $\{\theta^{\kappa_i}\}_{i\geq 1}$ and $\{\theta^{\kappa_i^\prime}\}_{i\geq 1}$ converge strongly in $C^0_t L^2_x$ to two {\em distinct} bounded weak solutions $\bar \theta$ and ${\bar\theta}^\prime$ of the transport equation~\eqref{eq:transport}. This demonstrates the lack of a selection principle for vanishing diffusivity limits of the advection-diffusion equation towards the transport equation. See~\cite[Proposition~5.5]{AV24}.
\item The constructed vector field $u$ is ``nearly'' a weak solution of the incompressible Euler equations, in the sense of~\cite[Proposition~2.8]{AV24}. 
\item The class of vector fields $u$ for which the all the conclusions of this Theorem hold is dense in the class of all smooth zero-mean divergence-free vector fields, with respect to the $C^0_{x,t}$ topology. See~Remark~\ref{rem:dense} below.
\end{itemize}
\end{theorem}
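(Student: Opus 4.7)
The plan is to build $u$ as an infinite sum $u = \sum_{n \geq 0} u_n$ of mean-zero divergence-free "shells," where $u_n$ oscillates on a spatial scale $\ell_n = \lambda^{-n}$ and a temporal scale $\tau_n = \ell_n^{1-\alpha}$, with amplitude $a_n \sim \ell_n^{\alpha}$. This Kolmogorov-type scaling automatically places $u$ in $C^0_t C^\alpha_x \cap C^\alpha_t C^0_x$ provided $\lambda$ is large and the shells are chosen with frequency-disjoint Fourier support. Each shell will be designed so that, when viewed from the next-larger scale $\ell_{n-1}$ and in the presence of an "incoming" effective diffusivity $\kappa_n$ coming from all finer scales, $u_n$ homogenizes to contribute an enhanced (eddy) diffusivity $\kappa_{n-1} = \Phi(\kappa_n)$ for some explicit renormalization map $\Phi$ with $\Phi(\kappa_n) \gg \kappa_n$ on an appropriate window of $\kappa_n$. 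Iterating $\Phi$ starting from the molecular value at the finest active scale should drive the effective diffusivity all the way up to $O(1)$ at the unit scale.

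\textbf{The one-step quantitative homogenization lemma.} The analytic heart of the proof is a quantitative homogenization theorem for \eqref{eq:drift:diffusion} with velocity $u_n + (\text{mean-zero small correction})$ and diffusivity $\kappa_n$, over time intervals of length $\tau_{n-1}$ and spatial blocks of size $\ell_{n-1}$. One would choose each $u_n$ to be a superposition of localized Beltrami-like "eddies" adapted so that its cell-problem (the corrector equation at scale $\ell_n$) produces an explicit positive definite effective diffusion tensor scaling like $a_n^2 \tau_n \gtrsim \kappa_{n-1}$. I would follow the Armstrong--Kuusi--Mourrat framework: establish an $H^{-1}$-type subadditive quantity, use a Caccioppoli/energy inequality adapted to the parabolic drift-diffusion operator, and obtain algebraic error rates of the form $(\kappa_n/\kappa_{n-1})^{\sigma}$ for the difference between $\theta^{\kappa_n}$ and the solution of the homogenized equation with diffusivity $\kappa_{n-1}$. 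Crucially the proof must give quantitative rates good enough to be summed across $n$, both forward in the hierarchy (so that errors accumulated below do not spoil the cascade) and uniformly in the initial data.

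\textbf{Iteration, subsequence, and quantitative anomaly.} Given molecular $\kappa$, let $N(\kappa)$ be the largest $n$ with $\kappa < \kappa_n$; on the window $\kappa \in [\tfrac12 \kappa_{N}, 2\kappa_{N}]$ the finest "active" shell is $u_N$. By induction on $n$ running from $N$ down to $0$, applying the one-step lemma at each level, I would show that the solution $\theta^\kappa$ of \eqref{eq:drift:diffusion} is, to within summable error, close in $L^\infty_t L^2_x$ to the solution $\bar\theta$ of the fully homogenized equation $\partial_t \bar\theta + u_0 \cdot \nabla \bar\theta - \ddiv(\bar K \nabla \bar\theta) = 0$ at scale one, with effective matrix $\bar K \succeq c \Id$ depending only on $(\alpha,d)$. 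The energy identity for this limiting equation produces $O(1)$ dissipation on $[0,1]$, yielding \eqref{eq:anomaly} with $\varrho \gtrsim L_{\mathsf{in}}^{2(1+\alpha)/(1-\alpha)+\eps}$ once one tracks the dependence on the smoothness of the datum through the homogenization error. The subsequence $\mathcal{K} = \cup_j [\tfrac12 \kappa_j, 2\kappa_j]$ is simply the union of plateaus on which $N(\kappa)$ is constant and the cascade closes cleanly; the H\"older-in-time bound for $\theta^\kappa$ along $\mathcal{K}$ comes from the parabolic regularity of the homogenized equation at the top scale plus the quantitative closeness, giving the non-atomic measure $\mathcal{E}(dt)$.

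\textbf{Remaining items and main obstacle.} Non-uniqueness of the transport limit (fifth bullet) follows by choosing two disjoint subsequences of scales on which the shell $u_n$ is locally rotated or time-translated, so that the homogenized tensors $\bar K$ and $\bar K'$ along the two families differ and the corresponding advection-diffusion limits $\bar\theta \neq \bar\theta'$ solve \eqref{eq:transport} with \emph{the same} $u$. For the near-Euler property (sixth bullet) I would pick each $u_n$ to be a shifted superposition of time-independent Beltrami flows at scale $\ell_n$ in cleverly chosen frames; up to controllable pressure and a remainder of size $\sum_n a_n^2/\ell_n$, this makes $u$ an approximate dissipative Euler solution in the sense of \cite{AV24}. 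Density in the $C^0_{x,t}$ topology (seventh bullet) is obtained by fixing a smooth divergence-free background $v$, adding a rescaled cascade $\eps\, u$ of $C^0$-size $\eps$, noting that replacing $u$ by $v+\eps u$ preserves the one-step homogenization estimates (since $v$ is smooth and so has negligible effect at fine scales), and letting $\eps \to 0$. The main obstacle is unquestionably the one-step quantitative homogenization step with rates that are both (i) strong enough to survive infinitely many iterations and (ii) uniform over a large enough window of initial-data length scales to deliver the claimed exponent in $\varrho$; everything else is bookkeeping on top of this quantitative renormalization group.
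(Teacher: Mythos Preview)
Your renormalization-group picture is correct in spirit and matches the paper's strategy at the coarsest level, but two concrete mechanisms are missing, and without them the argument does not close.

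\textbf{The sweeping effect and Lagrangian frames.} You propose to add the shells $u_n$ in Eulerian coordinates (with disjoint Fourier support). This is precisely the ``naive attempt'' of \S\ref{sec:naive:construction}, which the paper explicitly shows to fail in \S\ref{sec:mean:flow:killer}: at scale $\ell_n$ the coarser field $u_0+\cdots+u_{n-1}$ acts as a large, nearly constant drift, and a generic constant drift superimposed on a mean-zero oscillatory flow destroys the Taylor-dispersion enhancement (averaging overtakes homogenization; see~\cite[Appendix~A]{AV24}). Frequency disjointness does not help, because the interaction is through advection, not Fourier resonance. The paper's fix (\S\ref{sec:construct:Lagrangian}) is to define the $m$th stream-function increment \emph{composed with the inverse Lagrangian flow} $X_{m-1,\ell}^{-1}$ of the previously constructed field, as in~\eqref{eq:phi:increment:good}, so that in that moving frame the increment has zero mean and the cell problem behaves as intended. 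This forces a new flow-map refresh timescale $\tau''_{m-1}$ obeying $\tau_m \ll \tau''_{m-1} \ll \|\nabla u_{m-1}\|_\infty^{-1}$, and the compatibility of this chain with the homogenization constraint $\tau_m \gg \eps_m^2/\kappa_m$ is exactly what produces the threshold $\alpha<\tfrac13$; see~\eqref{eq:alpha:one:third:emerges}. Your outline offers no mechanism accounting for this restriction.

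\textbf{Super-geometric scale separation.} You take $\ell_n=\lambda^{-n}$, a geometric sequence. The paper requires $\eps_m\simeq\eps_{m-1}^q$ with $q>1$, so that the scale ratio $\eps_m/\eps_{m-1}\simeq\eps_{m-1}^{q-1}\to 0$. The one-step error in Lemma~\ref{lem:hom:one:step} is $C\eps_{m-1}^\delta$ with $\delta\approx(q-1)^2$, and only this super-geometric decay makes the telescoping product $\prod_m(1\pm C\eps_{m-1}^\delta)$ bounded as $M\to\infty$. With geometric scales the per-step error is a fixed constant and the product diverges; the paper singles this out as a principal open problem in \S\ref{sec:open}. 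Relatedly, the paper does not use subadditive quantities \`a la Armstrong--Kuusi--Mourrat: because the building blocks are explicit shear flows composed with smooth diffeomorphisms, the correctors are essentially explicit (the ``twisted correctors'' in~\eqref{e.actual.ansatz}) and the argument is a hands-on parabolic two-scale expansion. Two smaller discrepancies: density (Remark~\ref{rem:dense}) is obtained by replacing the base stream function $\phi_0=0$ with an arbitrary smooth one and taking $\eps_1$ small, not by rescaling $u$; and non-selection comes from the instability of the recursion~\eqref{eq:kappa:m:real:2} under perturbing $\kappa$ within a fixed window $[\tfrac12\kappa_j,2\kappa_j]$, not from altering the shells along subsequences (which would change $u$ itself).
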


The construction of the vector field $u$ postulated in Theorem~\ref{thm:main} has many features in common with Nash-type convex-integration schemes for the incompressible Euler equations: multi-scale constructions with super-geometric scale separation, infinitely-many nested active scales with ``building blocks'' that are stationary Euler solutions, and the usage of Lagrangian flows to advect small-scale features by large-scale flows. On the other hand, the analysis of the passive scalars $\theta^\kappa$ is based on ideas from quantitative homogenization. 
Specifically, we need to perform homogenization  \emph{iteratively across many length scales}. In this way, we observe the emergence of a sequence of effective ``eddy'' diffusivities, which are recursively defined by homogenizing ``up the scales'', from the small scale at which molecular diffusion acts, to the large scale of the periodic box. In this process, every homogenization step resembles Taylor~\cite{Taylor} diffussion specialized to the case of a shear flow. An outline of the proof of Theorem~\ref{thm:main} is provided in~\S\ref{sec:proof:outline} below.

\subsection{Anomalous diffusion via weak solutions of 3D incompressible Euler}

We wish to highlight a very important extension of Theorem~\ref{thm:main}, which was recently obtained by Burczak, Sz\'ekelyhidi, and Wu in~\cite{Leipzig}. Whereas the vector field $u$ in Theorem~\ref{thm:main} may only be shown to be ``nearly'' a weak solution of the incompressible Euler equations on $\TT^d$,\footnote{See Proposition~2.8 in~\cite{AV24}.} the authors of~\cite{Leipzig} have shown that anomalous dissipation (as in Theorem~\ref{thm:main}) holds for large classes of vector fields $u \in C^\alpha_{x,t}$ (with $\alpha < \frac 13$), that are true weak solutions of three-dimensional incompressible Euler equations:

\begin{theorem}[Burczak, Sz\'ekelyhidi, Wu~\cite{Leipzig}]
\label{thm:Leipzig}
Fix $d=3$. For any $\alpha \in (0,\frac 13)$, there exists a weak solution $u\in C^0([0,1]; C^\alpha(\TT^3))  \cap C^\alpha([0,1];C^0(\TT^3))$ of the incompressible Euler equations, such that the advection-diffusion equation~\eqref{eq:drift:diffusion} with velocity $u$ exhibits anomalous diffusion, precisely as in Theorem~\ref{thm:main}. Additionally, the family of all such weak solutions $u \in C^{\alpha}_{x,t}$ of the Euler equations is dense in the class of all solenoidal vector fields, with respect to the $C^0_t H^{-1}_x$ topology.
\end{theorem}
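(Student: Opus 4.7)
The natural strategy is to combine the multi-scale Armstrong--Vicol construction from Theorem~\ref{thm:main} with the convex-integration machinery that produces Hölder weak solutions of the 3D Euler equations in the Onsager-subcritical regime $\alpha < \tfrac 13$. Since the vector field $u_{\mathsf{AV}}$ from Theorem~\ref{thm:main} already possesses all the required properties \emph{except} being an exact weak solution of Euler, and since (by Proposition~2.8 of~\cite{AV24}) its Euler residual is small in a quantitative sense, the goal is to perturb it into a genuine weak Euler solution while preserving the nested scale structure that drives the backward cascade of eddy viscosities. The regularity constraint $\alpha < \tfrac 13$ is natural on both sides of the construction, which is encouraging.

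I would set up a Nash-type iteration indexed by a super-geometric frequency sequence $\{\lambda_q\}_{q\geq 0}$, subordinated to (or identical with) the active scales of the Armstrong--Vicol construction. At stage $q$ one would maintain a field $u_q \in C^\alpha_{x,t}$ whose Reynolds stress $R_q := \partial_t u_q + \ddiv(u_q \otimes u_q) + \nabla p_q$ has amplitude $\delta_{q+1}$ and is supported at frequencies $\lesssim \lambda_q$, while $u_q$ itself oscillates with amplitude $\delta_q^{1/2}$ at frequency $\lambda_q$. The correction $w_{q+1} = u_{q+1} - u_q$ would be built from solenoidal building blocks (Mikado flows, intermittent jets, or Beltrami waves) whose quadratic self-interaction cancels $R_q$. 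The crucial compatibility requirement is that these building blocks be adapted, inside each mesoscale cell of the Armstrong--Vicol construction, to the ``shear-like'' oscillatory structure exploited in each iterative homogenization step, so that the Taylor-dispersion computation at every scale continues to yield the prescribed effective eddy diffusivity. The anomalous diffusion conclusion for $u = \lim_q u_q$ would then be inherited from the Armstrong--Vicol analysis essentially unchanged.

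For the density claim, the plan is to exploit the fact that a Hölder perturbation of amplitude $\delta_q^{1/2}$ at frequency $\lambda_q$ contributes only $O(\delta_q^{1/2}\lambda_q^{-1})$ to the $C^0_t H^{-1}_x$ norm. Given any smooth divergence-free zero-mean target $v$, one initializes the iteration from $u_0$ that is either equal to $v$ or $C^0_{x,t}$-close to it via the density statement of Theorem~\ref{thm:main} (Remark~\ref{rem:dense}), and then carries out all subsequent perturbations at sufficiently high frequencies that the telescoping sum $\sum_q \|w_{q+1}\|_{C^0_t H^{-1}_x}$ is as small as desired. Together with mollification of a general solenoidal target at an arbitrarily fine scale, this yields the claimed density in the $C^0_t H^{-1}_x$ topology.

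The main obstacle is the tension between two distinct scale-separation philosophies. The Armstrong--Vicol proof requires \emph{super-geometric} separation between consecutive active scales, so that at each step the unresolved field can be treated as infinitely oscillatory relative to the resolved scale, whereas standard Euler convex-integration schemes are naturally geometric with a fixed ratio $\lambda_{q+1} = \lambda_q^{b}$. Reconciling the two forces one either to re-parametrize the Nash iteration to admit rapidly growing frequency ratios --- while verifying that the Reynolds-stress estimates still close at each step --- or to interleave ``silent'' iteration steps that correct errors without introducing new eddy scales. A secondary obstacle is that the convex-integration perturbations cancelling $R_q$ must respect the precise spatial localization and shear geometry that the Armstrong--Vicol analysis relies on; this likely constrains the building blocks to be supported on shear-aligned sub-patterns of each mesoscale cell, and demands careful bookkeeping of Lagrangian flow maps so that advection of small scales by large scales remains consistent with the Euler-stress cancellation.
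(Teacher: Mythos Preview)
Your overall strategy matches what the paper describes (note that the paper does not prove this theorem; it is quoted from~\cite{Leipzig} and only the approach is sketched in the paragraph following the statement). The core idea---interlace the Armstrong--Vicol backward cascade of renormalized diffusivities with a Nash-type convex-integration scheme for Euler in the Onsager-subcritical range, using Mikado building blocks---is exactly right, and your density argument via high-frequency perturbations is also the standard mechanism.

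There are, however, two concrete points where your proposal diverges from the actual construction. First, your ``main obstacle'' is a misreading: the relation $\lambda_{q+1}=\lambda_q^b$ with fixed $b>1$ gives $\lambda_q = \lambda_0^{b^q}$, which \emph{is} super-geometric (double-exponential), exactly like the Armstrong--Vicol scales $\eps_m\simeq\eps_{m-1}^q$. The two scale hierarchies are compatible by design, and no ``silent steps'' or re-parametrization is needed to reconcile them. Second, you miss the two technical difficulties the paper singles out as the genuine obstacles in~\cite{Leipzig}: (i) because the building blocks are Mikado flows rather than explicit shears, the cell-problem correctors are no longer explicit and must be shown to be well-approximated by explicit functions (this is what allows the Taylor-dispersion computation to survive); and (ii) Isett's gluing step---which is essential to reach any $\alpha<\tfrac13$ in the Euler scheme---introduces an additional time-switching of the fast oscillations, forcing a more involved time-homogenization step beyond what is in~\cite{AV24}. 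Your proposal to ``perturb $u_{\mathsf{AV}}$ into a genuine Euler solution'' understates this: the construction in~\cite{Leipzig} is not a perturbation of a pre-existing Armstrong--Vicol field but a simultaneous iteration in which each stage produces both the Reynolds-stress cancellation and the homogenization-compatible oscillation.
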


The proof of Theorem~\ref{thm:Leipzig} uses the concepts and analytical methodology of our work~\cite{AV24}, and elegantly combines them with ideas from convex-integration / Nash-schemes for the incompressible Euler equations (the technical implementation in~\cite{BDLSZV} is of particular relevance). Notably, Burczak, Sz\'ekelyhidi, and Wu succeed in~\cite{Leipzig} to interlace the backwards energy cascade characteristic of convex integration schemes, with the backwards cascade of renormalized diffusivities which is underpinning iterated homogenization. At a technical level,~\cite{Leipzig} needs to contend with the fact that the correctors are not anymore explicit,\footnote{As opposed to~\cite{AV24},  the solution of the cell-problem in~\cite{Leipzig} is not explicit. Instead, the correctors are very well-approximated by explicit functions. This is known to be sufficient, because the space-homogenization step is stable under small perturbations in the diffusion matrix.} and with the fact that Isett's gluing technique~\cite{Isett} necessitates a more involved time-homogenization step.\footnote{Time-homogenization due to switching of flow-maps was  a difficulty already faced and resolved in~\cite{AV24}. The complication in~\cite{Leipzig} stems from the turning-on and turning-off of the fast oscillations in $u$, due to the so-called ``gluing step''.} Lastly, we note that the proof in~\cite{Leipzig} necessitates at least three space dimensions, due to the usage of non-intersecting Mikado flows~\cite{Daneri}. It is conceivable that a two-dimensional result is attainable by combining the ideas in~\cite{AV24,Leipzig} with the Newton-Nash scheme which was used to resolve the 2D Onsager conjecture~\cite{GiriRadu}.

\section{Background on quantitative homogenization}
\label{sec:hom}
Before turning to the proof of Theorem~\ref{thm:main}, it is instructive to briefly recall the basic two-scale expansion in parabolic periodic homogenization, emphasizing the relation between the effective equation and the phenomenon of \emph{advection-enhanced diffusion} (see \S\ref{sec:two-scale}), and then to briefly recall some of the basic themes of \emph{quantitative homogenization}, highlighted in the context of the homogenization problem of a periodic shear flow, and leading to the Taylor diffusion formula (see \S\ref{sec:dispersion}).

\subsection{The two-scale expansion in parabolic homogenization}
\label{sec:two-scale}

We begin with a general presentation of the parabolic two-scale expansion in the periodic setting; this material is standard (see for example~\cite{Jikov,GS,PS}). We consider a~$\ZZ \times \ZZ^d$-periodic, mean-zero, incompressible vector field~$b = b(t,x)$ and for $\eps,\kappa>0$, consider the advection-diffusion equation
\begin{equation}
\label{e.advec.u}
\partial_t \theta_\eps - \kappa \Delta \theta_\eps + \tfrac1\eps b(\tfrac{t}{\eps^2},\tfrac{x}{\eps}) \cdot \nabla \theta_\eps  = 0 
\,.
\end{equation}
By introducing a  stream matrix $\mathbf{s}$ for the vector field $b$ (that is, an anti-symmetric matrix such that~$-\ddiv \mathbf{s} = b$), the  advection term in~\eqref{e.advec.u} may be expressed as a second-order term $\frac{1}{\eps} b(\frac{t} {\eps^2},\frac{x}{\eps} ) \cdot \nabla \theta_\eps =  - \ddiv ( \mathbf{s}(\frac{t} {\eps^2},\frac{x}{\eps} )\nabla \theta_\eps  )$; in turn, this allows us to write~\eqref{e.advec.u} as
\begin{equation}
\label{e.advec.s}
\partial_t \theta_\eps - \ddiv \bigl( \mathbf{a}^\eps \nabla \theta_\eps \bigr)  = 0 \,,
\quad \mbox{where} \quad 
\mathbf{a}^\eps (t,x):=  \kappa \Id + \mathbf{s}( \tfrac{t} {\eps^2},\tfrac{x}{\eps}  ) = \mathbf{a}( \tfrac{t} {\eps^2},\tfrac{x}{\eps}  ) \,.
\end{equation}
Note that~\eqref{e.advec.s} contains a diffusion matrix $\mathbf{a}^\eps $ that oscillates fast (it depends on $\eps$), and is not symmetric (since $\mathbf{s}$ is skew-symmetric).  
Homogenization theory quantifies the convergence as $\eps \to 0$ (say with respect to the $L^2_{t,x}$ topology) of solutions $\theta_\eps$ of~\eqref{e.advec.s} to solutions $\theta$ of the \emph{effective equation}
\begin{equation}
\label{e.advec.s.hom}
\partial_t  {\theta} - \ddiv \bigl( \bar{\mathbf{a}} \, \nabla  {\theta} \bigr)  = 0 \,,
\end{equation}
where:
\begin{itemize}[leftmargin=*]
\item the homogenized/effective diffusion matrix $\bar{\mathbf{a}}$ is given by
\begin{equation}
\label{eq:abar:def}
\bar{\mathbf{a}} e = \bigl\langle\!\!\bigl\langle \mathbf{a} (e + \nabla \chi_e )
\bigl\rangle\!\!\bigl\rangle \,, \quad e \in\mathbb{S}^{d-1}\,, \quad \mathbf{a} :=  \kappa \Id + \mathbf{s} \,, 
\end{equation}
\item $\langle\hspace{-2.5pt}\langle\cdot \rangle\hspace{-2.5pt}\rangle$ denotes the space-time average of a~$\ZZ\times\ZZ^d$--periodic function, 
\item $\chi_e$ is the \emph{corrector} with slope~$e$; that is, the unique $\ZZ\times\ZZ^d$--periodic solution of the \emph{cell problem}
\begin{equation}
\label{eq:chie:def}
\partial_t \chi_e -\ddiv  \bigl( \mathbf{a} (e+\nabla \chi_e ) \bigr)= 0 \,,
\qquad
\bigl\langle\!\!\bigl\langle \chi_e \bigl\rangle\!\!\bigl\rangle = 0 \,.
\end{equation}
\end{itemize}
The above definitions show that the symmetric part of the effective diffusion matrix~$\bar{\mathbf{a}}$ is given by 
\begin{equation}
\label{eq:abar:sym}
\tfrac12 (\bar{\mathbf{a}} + \bar{\mathbf{a}}^t) _{ij} 
=
\kappa \delta_{ij} 
+ 
\kappa \, \bigl\langle\!\!\bigl\langle \nabla \chi_{e_i} \cdot \nabla \chi_{e_j}\bigl\rangle\!\!\bigl\rangle\,.
\end{equation}
The second term on the right side of \eqref{eq:abar:sym} is positive (as a nonnegative definite matrix), and therefore the symmetric part of~$\bar{\mathbf{a}}$ is larger than the original diffusion matrix~$\kappa \Id$. This effect is called the \emph{enhancement of diffusivity due to advection}.\footnote{The enhancement of diffusivity due to advection, from the point of view of homogenization theory, is by now classical, and we do not offer a complete summary of the relevant literature here. We instead refer the reader to~\cite{FP1,MK,PS} and the references therein.}

The convergence $\|\theta_\eps - \theta \|_{L^2_{t,x}} \to 0$ as $\eps \to 0$, where $\theta_\eps$ solves the microscopic diffusion equation~\eqref{e.advec.s} and $\theta$ is an appropriately chosen solution of the macroscopic or effective equation~\eqref{e.advec.s.hom} is summarized informally as: 
\begin{center} 
``the  operator $\partial_t - \ddiv(\mathbf{a}^\eps \nabla)$ \emph{homogenizes} as $\eps \to 0$ to the operator $\partial_t - \ddiv(\bar{\mathbf{a}} \nabla)$.''
\end{center}
A standard formalization of the above statement is achieved by proving that $\theta_\eps$ is well-approximated by its ``two-scale expansion'', which is defined in~\eqref{e.twoscale.wep.para} below.

The {\em parabolic} two-scale expansion is constructed as follows. In addition to the corrector with slope $e$, $\chi_e$ (defined in~\eqref{eq:chie:def}), and to the homogenized matrix $\bar{\mathbf{a}}$ (defined in~\eqref{eq:abar:def}), we introduce:
\begin{itemize}[leftmargin=*]
\item  the space-homogenized matrix $\bar{\mathbf{a}}(t)$ is defined for $t\in \RR$ by 
\begin{equation*}
\bar{\mathbf{a}}(t) e = \langle \mathbf{a}(t,\cdot) ( e + \nabla \chi_e(t,\cdot) ) \rangle,
\qquad 
e\in\mathbb{S}^{d-1}\,,
\end{equation*} where $\langle \cdot \rangle$ denotes the space average of a $\ZZ^d$--periodic function;
\item this allows us to define the matrix
\begin{equation*}
\mathbf{k}(t):= \int_0^t \bigl(\bar{\mathbf{a}}(t^\prime) - \bar{\mathbf{a}} \bigr) dt^\prime \,,
\end{equation*}
which by construction has zero-mean in time, i.e.~$\int_0^1 \mathbf{k}(t) dt = 0$;
\item integrating~\eqref{eq:abar:def} with respect to space alone implies that $\langle \partial_t \chi_e \rangle = 0$, and thus for each $t\in \RR$ we may define the \emph{time corrector} $h_e(t,\cdot)$ as the the $\ZZ^d$--periodic $H^1(\RR^d)$ solution of 
\begin{equation*}
-\Delta h_e(t,\cdot) = \partial_t \chi_e(t,\cdot)\,, \qquad \langle  h_e(t,\cdot) \rangle = 0 \,;
\end{equation*}
\item the purpose of introducing the space-homogenized matrix $\bar{\mathbf{a}}(t)$ and the time corrector $h_e(t,\cdot)$ is to ensure that for each $t\in \RR$ the vector field
\begin{equation*}
g_e(t,\cdot) := \mathbf{a}(t,\cdot) \bigl(e + \nabla \chi_e(t,\cdot) \bigr) - \bar{\mathbf{a}}(t) e + \nabla h_e(t,\cdot)
\end{equation*}
satisfies $\ddiv g_e(t,\cdot) = 0$ and $\langle g_e(t,\cdot) \rangle = 0$; in turn, this allows us to write $g_e(t,\cdot)$ as the divergence of a skew-symmetric matrix $\mathbf{m}_e(t,\cdot)$, which is the zero-mean solution of 
\begin{equation*}
-\Delta \mathbf{m}_{e,ij}(t,\cdot) = \partial_i g_{e,j}(t,\cdot)  - \partial_j g_{e,i}(t,\cdot) \,.
\end{equation*}
\end{itemize}
With this notation, the parabolic two-scale expansion is given by
\begin{equation}
\label{e.twoscale.wep.para}
 \tilde{\theta}_\eps(t,x):=
 \theta(t,x) + \eps \sum_{i=1}^d \partial_i \theta(t,x) \chi_{e_i}\bigl(\tfrac{t}{\eps^2},\tfrac{x}{\eps} \bigr) + \eps^2 \sum_{i,j=1}^d \mathbf{k}_{ij}\bigl(\tfrac{t}{\eps^2}\bigr) \partial_{ij} \theta(t,x)
 \,,
\end{equation}
where $\theta$ is a solution of~\eqref{e.advec.s.hom}. The function $\theta$ is determined only once we specify a domain and a parabolic boundary condition.

In order to show that $\tilde{\theta}_\eps$ is a good approximation of the solution~$\theta_\eps$ of~\eqref{e.advec.s} (again, by this we mean that one fixes a domain, and lets $\theta_\eps$ match the values of $\theta$ on the corresponding parabolic boundary), we simply plug in the ansatz~\eqref{e.twoscale.wep.para} into~\eqref{e.advec.s} and try to argue that the error we make is small. Using that~$\theta$ solves~\eqref{e.advec.s.hom} we arrive at:
\begin{equation}
\label{e.advec.s.approx}
\partial_t   \tilde{\theta}_\eps - \ddiv \bigl( \mathbf{a}^\eps \nabla  \tilde{\theta}_\eps \bigr) = f_\eps \,,
\end{equation}
where the ``error'' $f_\eps$ is given explicitly as
\begin{align}
\label{e.advec.s.f}
f_\eps(t,x) 
&:=  \eps 
\ddiv \left( 
\sum_{i=1}^d 
\left( h_{e_i} \Id - \chi_{e_i} \mathbf{a}  - \mathbf{m}_{e_i}  \right)\bigl(\tfrac{t}{\eps^2},\tfrac{x}{\eps} \bigr)
\nabla \partial_{x_i} \theta(t,x) \right)
\notag\\
&\quad 
+ \eps \sum_{i=1}^d 
\left( \chi_{e_i}\bigl(\tfrac{t}{\eps^2},\tfrac{x}{\eps} \bigr) \partial_{x_i}\partial_t \theta(t,x) 
+
h_{e_i} \bigl(\tfrac{t}{\eps^2},\tfrac{x}{\eps} \bigr)
\partial_{x_i}\Delta \theta (t,x)
\right)
+ \eps^2 \sum_{i,j=1}^d \mathbf{k}_{ij}\bigl(\tfrac{t}{\eps^2}\bigr) \partial_{ij} \partial_t \theta (t,x).
\end{align}
The proof of the statement ``the  operator $\partial_t - \ddiv(\mathbf{a}^\eps \nabla)$ \emph{homogenizes} as $\eps \to 0$ to the operator $\partial_t - \ddiv(\bar{\mathbf{a}} \nabla)$'', quantified by the convergence $\|\theta_\eps - \theta\|_{L^2_{t,x}} \to 0$ as $\eps \to 0$, now follows from~\eqref{e.twoscale.wep.para}, \eqref{e.advec.s.approx}, and~\eqref{e.advec.s.f}:
\begin{itemize}[leftmargin=*]
\item (good) bounds on the solution $\chi_e$ of the cell problem~\eqref{eq:chie:def} imply (good) bounds for the time correctors $h_e$, the skew-symmetric matrix $\mathbf{m}_e$, and the matrix $\mathbf{k}$; 
\item coupled with (good) derivative estimates for the macroscopic function $\theta$, which solves the homogenized problem~\eqref{e.advec.s.hom}, the aforementioned bounds show that the forcing $f_\eps$ appearing on the right side of \eqref{e.advec.s.approx} is $O(\eps)$ (say in $L^2_t H^{-1}_x$);
\item additionally, note that by the linearity of~\eqref{e.advec.s} we have that 
\begin{equation}
\label{eq:its:a:linear:world}
\partial_t(\tilde {\theta}_\eps - \theta_\eps) - \ddiv \bigl( \mathbf{a}^\eps \nabla (\tilde{\theta}_\eps - \theta_\eps) \bigr) = f_\eps, 
\end{equation}
and so by the standard parabolic energy estimate, $\tilde {\theta}_\eps - \theta_\eps$ is $O(\eps)$ (say in $L^2_{t,x}$); 

\item Since $\theta_\eps$ match the values of $\theta$ on the parabolic boundary of the spacetime domain, and~$\tilde {\theta}_\eps$ is an~$O(\eps)$ perturbation from~$\theta$, the difference~$\tilde {\theta}_\eps - \theta_\eps$ is at most~$O(\eps)$ on the boundary (say in~$L^\infty_{t,x}$); 

\item to conclude, note that from~\eqref{e.twoscale.wep.para} and the above mentioned bounds we directly obtain that $\tilde{\theta}_\eps - \theta$ is $O(\eps)$ (say in $L^2_{t,x}$) by standard energy estimates. 

\end{itemize}
The above items sweep under the rug the fact that the aforementioned bounds depend in a crucial way on the ellipticity constant of the matrix $\mathbf{a} = \kappa \Id + \mathbf{s}$, on the ellipticity ratio of $\mathbf{a}$, which quantifies the size of the skew part $\mathbf{s}$ to the size of the symmetric part $\kappa \Id$. They also depend on the size of the derivatives of the macroscopic function~$\theta$, since these appear in the definition of~$f_\eps$ in~\eqref{e.advec.s.f}. These considerations are crucial for the quantitative homogenization estimates which we discuss next.

\subsection{Quantitative homogenization for shear flows: Taylor diffusion}
\label{sec:dispersion}
Stated as in~\S\ref{sec:two-scale}, it appears that homogenization relies on \emph{asymptotic scale separation} (the limit $\eps\to 0$), which is a feature inconsistent with turbulence models.\footnote{See e.g.~the criticisms  in~\cite[page 225]{Frisch} and~\cite[page 304]{MK}.} Asymptotic scale separation is also inconsistent with the construction of the vector field $u$ in~\S\ref{sec:construct}. There, $\eps$ represents the ratio of two consecutive length scales, which for a fixed parameter $m$ in the iteration corresponds to $0< \frac{\eps_{m+1}}{\eps_m} < 1$. Note however that for $m=100$, say, the ratio $\frac{\eps_{101}}{\eps_{100}}$ is fixed, and cannot be sent to $0$. This is why \emph{quantitative  homogenization} methods need, and should, be used! Rather than arguing that some quantities converge to others in the limit of infinite scale separation, one needs to precisely quantify the length scales and time scales on which homogenization may be observed to have occurred, modulo small errors (which then need to be summed up across scales). Fortunately, quantitative homogenization rates for periodic coefficient fields are very explicit and quite well-understood (see~\cite{Jikov,ShenBook} for instance).

We will next highlight this quantitative homogenization perspective for the operator $\partial_t - \kappa\Delta + u \cdot \nabla$, when $u$ is a {\em two-dimensional shear flow}. For parameters $a,\eps>0$, let us consider  
\begin{equation}
\label{eq:u:a:eps:def}
    u = u_{a,\eps} = 2\pi a \eps \cos \big( \tfrac{2\pi x_2}{\eps} \big)  e_1 \,,
\end{equation}
so that $0 < \eps \ll 1$ is the scale of spatial oscillation (note that $u$ is time independent), and $a \gg 1$ is proportional to the Lipschitz norm of $u$ (note that $\|\nabla u\|_{L^\infty(\TT^2)} = 4 \pi^2 a $). Since we are two space dimensions, $\langle u \rangle = 0$, and $\ddiv u = 0$, we may write
\begin{equation}
\label{eq:psi:a:eps:def}
u_{a,\eps} = \nabla^\perp \psi_{a,\eps}\,,
\qquad \mbox{where} \qquad 
\psi_{a,\eps} = a \eps^2 \sin \big( \tfrac{2\pi x_2}{\eps} \big)\,.
\end{equation}
Associated to the stream function $\psi_{a,\eps}$ we may associate the anti-symmetric matrix $ \psi_{a,\eps} \Itwo^\perp $, where $\Itwo^\perp$ is the (rotation) matrix with components $(\Itwo^\perp)_{11} = (\Itwo^\perp)_{22} = 0$ and $(\Itwo^\perp)_{21} = - (\Itwo^\perp)_{12} = 1$. 
Then, in analogy to \eqref{e.advec.s}, we may rewrite 
\begin{equation}
\label{eq:bf:a:eps:shear:def}
\partial_t - \kappa\Delta + u_{a,\eps} \cdot \nabla
=
\partial_t - \ddiv \bigl( \mathbf{a}^\eps \nabla \bigr)
\end{equation}
where for compatibility of notation we have defined 
\begin{equation}
\mathbf{a}^\eps(x) := \kappa \Itwo + \psi_{a,\eps}(x) \Itwo^\perp = \mathbf{a}\bigl(\tfrac{x}{\eps}\bigr)
\,, 
\quad \mbox{with} \quad 
\mathbf{a}(y) := \kappa \Itwo + a \eps^2 \sin(2\pi y_2)\Itwo^\perp
\label{eq:bf:a:shear:def}
\,.
\end{equation}
Here $y$ is a placeholder for the fast space variable $\frac{x}{\eps}$.
In the example provided by~\eqref{eq:bf:a:eps:shear:def}--\eqref{eq:bf:a:shear:def} we may take advantage of the fact that $\mathbf{a}$ is independent of time and of the first space coordinate to deduce that the correctors $\chi_e$ (which solve the cell problem~\eqref{eq:chie:def}) are also independent of time and of the first space coordinates. In fact, we may quickly verify that the solution of~\eqref{eq:chie:def} is given by
\begin{equation}
\chi_{e_1}(y) = \tfrac{a\eps^2}{ 2\pi \kappa} \cos(2\pi y_2) \,,
\quad \mbox{and} \quad 
\chi_{e_2}(y) = 0 \,,
\label{eq:chi:e:shear:def}
\end{equation}
which is clearly $\ZZ\times\ZZ^2$--periodic. Plugging~\eqref{eq:chi:e:shear:def} into~\eqref{eq:abar:def} we obtain
\begin{equation}
\label{eq:bf:a:bar:shear}
\bar{\mathbf{a}}=  
\kappa \Itwo +
\begin{pmatrix}
        \tfrac{a^2 \eps^4}{2\kappa} && 0 \\
        0 && 0
\end{pmatrix}
 = \begin{pmatrix}
       \kappa + \tfrac 12 \kappa \cdot \bigl( \tfrac{a  \eps^2}{ \kappa} \bigr)^2 && 0 \\
        0 && \kappa
\end{pmatrix}
\,.
\end{equation}
The formula for the effective diffusion matrix $\bar{\mathbf{a}}$ given in~\eqref{eq:bf:a:bar:shear} is called the {\em Taylor diffusion formula}~\cite{Taylor} (specialized to the case of the shear flow in~\eqref{eq:u:a:eps:def}); see also the work of Aris~\cite{Aris}.

The Taylor diffusion formula for $\bar{\mathbf{a}}$ obtained in~\eqref{eq:bf:a:bar:shear} is notable because $\bar{\mathbf{a}}$ is diagonal (note that $\mathbf{a}$ had a non-constant anti-symmetric part), by definition it is constant in space and in time, and most notably, the enhancement of diffusion only occurs in the $e_1 \otimes e_1$ entry (a remnant of the fact that the $u$ in~\eqref{eq:u:a:eps:def} points in the $e_1$ direction but is independent of $x_1$). We also note that the quantity $\frac{a\eps^2}{\kappa}$ which appears squared as a factor for the strength of the enhancement is dimensionless: $a$ is the Lipschitz norm of a velocity field so it scales as $(\mbox{time})^{-1}$, the parameter $\eps$ has units of $(\mbox{length})$, while $\kappa$ is a molecular diffusivity so it has units of $(\mbox{length})^2 \cdot (\mbox{time})^{-1}$.

At this point we have simply derived an explicit formula for the homogenized matrix $\bar{\mathbf{a}}$, but we have not said anything \emph{quantitative} about the statement ``the  operator $\partial_t - \ddiv(\mathbf{a}^\eps \nabla)$ \emph{homogenizes} as $\eps \to 0$ to the operator $\partial_t - \ddiv(\bar{\mathbf{a}} \nabla)$.'' The following Lemma provides such a ``sample'' quantitative result; we do not claim that the following result is the sharpest possible, it is not the most general either; we present it here for pedagogical value, as it provides useful intuition about the quantitative bounds that may be extracted from the parabolic two-scale expansion.

\begin{lemma}[Quantitative homogenization and Taylor diffusion]
\label{lem:Taylor:quantitative}
Let $\mathbf{a}^\eps$ be defined by~\eqref{eq:bf:a:shear:def}, and let $\bar{\mathbf{a}}$ be defined by~\eqref{eq:bf:a:bar:shear}. The operator $\partial_t - \ddiv(\mathbf{a}^\eps \nabla)$ homogenizes to the operator $\partial_t - \ddiv(\bar{\mathbf{a}} \nabla)$, on 
\begin{equation}
\label{eq:scaling:stitches}
\mbox{\emph{time-scales} $\gtrsim a^{-1}\cdot  \bigl(1 + \frac{a\eps^2}{\kappa}\bigr) $ \qquad and  \qquad 
\emph{space-scales} $\gtrsim \eps \cdot  \bigl(1 + \frac{a\eps^2}{\kappa}\bigr)$ 
}\,.
\end{equation}
More precisely:
\begin{itemize}[leftmargin=*]
\item Define $\theta$ to be the $\TT^2$-periodic solution of the Cauchy problem for $\partial_t \theta - \ddiv(\bar{\mathbf{a}} \nabla \theta ) = 0$ (cf.~\eqref{e.advec.s.hom}), with smooth initial datum $\theta|_{t=0} = \theta_{\mathsf{in}}$. 
\item For $R>0$ define the \emph{parabolic cylinders adapted to $\bar{\mathbf{a}}$}, as
$Q_R = Q_R(t_0,x_0) = (t_0,x_0) + (-\frac{R^2}{\kappa},0]\times E_R(0)$, where $E_R$ is the ellipsoid defined by $\{x\colon \kappa^{\frac 12} |\bar{\mathbf{a}}^{-\frac 12} x|\leq R\} = \kappa^{-\frac 12} \bar{\mathbf{a}}^{\frac 12} B_R(0)$. In our case, due to~\eqref{eq:bf:a:bar:shear}, this ellipsoid has axes of length $\sim  R (1 + \frac{a\eps^2}{\kappa})$ in the $e_1$ direction, and $\sim R$ in the $e_2$ direction.
\item For every such parabolic cylinder $Q_{R}$ with $Q_{2R} \subseteq [0,1]\times \TT^2$, let $\theta_\eps$ be the solution of $\partial_t \theta_\eps - \ddiv(\mathbf{a}^\eps \nabla \theta_\eps) = 0$ in $Q_{R} $, with (parabolic) boundary data $\theta_\eps \equiv \theta$ on $\partial_{\mathrm{par}}Q_R$. \item Then, we have that
\begin{align}
\| \theta_\eps -   \theta\|_{L^\infty_t  \underline{L}^2_x (Q_R)} 
\les  
(1 + \tfrac{a\eps^2}{\kappa} ) 
\tfrac{\eps}{R}  \| \theta \|_{\underline{L}^2_{x,t}(Q_{2R})} 
\,.
\label{eq:wamboozle}
\end{align}
\end{itemize}
Here, $L^\infty_t \underline{L}^2_x(Q_R)$ and $\underline{L}^2_{x,t}(Q_R)$ denote  the usual volume-normalized norm (e.g.~integrals $\int_{Q_R}$ are replaced by $\fint_{Q_R}$).
In particular, when $R \gg \eps    (1 + \tfrac{a\eps^2}{\kappa} )$, we see that $\| \theta_\eps -   \theta\|_{L^\infty_t  \underline{L}^2_x (Q_R)}  \ll  \| \theta \|_{\underline{L}^2_{x,t}(Q_{2R})} $, which matches the heuristic claim regarding space-scales in~\eqref{eq:scaling:stitches}.
\end{lemma}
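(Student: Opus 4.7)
The plan is to work with the parabolic two-scale expansion~\eqref{e.twoscale.wep.para}, which is fully explicit in the shear case, and to bound the resulting error by a standard parabolic energy estimate inside the anisotropic cylinder $Q_R$, closing the estimate via Caccioppoli-type bounds for the constant-coefficient effective operator $\partial_t - \ddiv(\bar{\mathbf{a}}\nabla)$.

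The first step is to exploit the shear structure to simplify every object in~\eqref{e.twoscale.wep.para}--\eqref{e.advec.s.f}. Since $\mathbf{a}$ is time-independent one has $\partial_t\chi_e \equiv 0$, and therefore $h_e \equiv 0$ and $\mathbf{k}(t) \equiv 0$. Because the flow is purely horizontal, $\chi_{e_2}\equiv 0$, while $\chi_{e_1}$ is given explicitly by~\eqref{eq:chi:e:shear:def} and satisfies $\|\chi_{e_1}\|_{L^\infty} \lesssim \tfrac{a\eps^2}{\kappa}$; a short computation also yields an explicit trigonometric $\mathbf{m}_{e_1}$ with $\|\mathbf{m}_{e_1}\|_{L^\infty} \lesssim \tfrac{a^2\eps^4}{\kappa}$. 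The two-scale expansion then collapses to $\tilde\theta_\eps(t,x) = \theta(t,x) + \eps\,\chi_{e_1}(x/\eps)\,\partial_1\theta(t,x)$, and~\eqref{e.advec.s.f} reduces to
\[
f_\eps = \eps\,\ddiv\bigl((-\chi_{e_1}\mathbf{a} - \mathbf{m}_{e_1})(x/\eps)\,\nabla\partial_1\theta\bigr) + \eps\,\chi_{e_1}(x/\eps)\,\partial_1\partial_t\theta.
\]
Using the effective equation $\partial_t\theta = \ddiv(\bar{\mathbf{a}}\nabla\theta)$ to trade the time derivative in the second summand for second spatial derivatives, and applying the explicit bounds on the correctors, one obtains $\|f_\eps\|_{L^2_t H^{-1}_x(Q_R)} \lesssim \eps\,\kappa\,(1 + \tfrac{a\eps^2}{\kappa})\,\|\nabla^2\theta\|_{L^2_{x,t}(Q_R)}$ (with appropriate anisotropic weights).

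With $f_\eps$ and $\tilde\theta_\eps$ in hand, I would apply the parabolic energy estimate to $w_\eps := \tilde\theta_\eps - \theta_\eps$, which solves~\eqref{eq:its:a:linear:world} in $Q_R$ with parabolic boundary data $(\tilde\theta_\eps - \theta)|_{\partial_\mathrm{par}Q_R} = \eps\chi_{e_1}(x/\eps)\partial_1\theta|_{\partial_\mathrm{par}Q_R}$. The crucial observation is that, upon testing against $w_\eps$, the skew part of $\mathbf{a}^\eps$ drops out of the coercivity identity, so coercivity is provided by $\kappa|\nabla w_\eps|^2$ alone --- critically, the large ellipticity ratio of $\mathbf{a}$ does \emph{not} enter here. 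Lifting the boundary data inside $Q_R$ by its natural extension $\eps\chi_{e_1}(x/\eps)\partial_1\theta$, this yields
\[
\|w_\eps\|_{L^\infty_t\underline{L}^2_x(Q_R)} \lesssim \kappa^{-1/2}\,\|f_\eps\|_{L^2_t H^{-1}_x(Q_R)} + \eps\,(1+\tfrac{a\eps^2}{\kappa})\,\|\nabla\theta\|_{L^\infty_t\underline{L}^2_x(Q_R)}.
\]
Finally, Caccioppoli and interior regularity estimates for the constant-coefficient equation $\partial_t\theta -\ddiv(\bar{\mathbf{a}}\nabla\theta) = 0$, applied to the nested anisotropic pair $Q_R \subset Q_{2R}$, bound $\|\nabla\theta\|$ and $\|\nabla^2\theta\|$ over $Q_R$ by appropriately weighted multiples of $R^{-1}$ and $R^{-2}$ times $\|\theta\|_{\underline{L}^2_{x,t}(Q_{2R})}$. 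Combining through the triangle inequality $\theta_\eps - \theta = w_\eps + (\tilde\theta_\eps - \theta)$ then produces~\eqref{eq:wamboozle}.

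The main obstacle will be to track the factor $(1+\tfrac{a\eps^2}{\kappa})$ so that it appears linearly rather than quadratically on the right-hand side of~\eqref{eq:wamboozle}. Since this quantity is precisely the ellipticity ratio of $\mathbf{a}$, a naive energy estimate would inflate it by an extra power. Two structural features save the day: antisymmetry of $\mathbf{s}$ removes it entirely from the coercivity identity for $w_\eps$, and the cylinder $Q_R$ is adapted to $\bar{\mathbf{a}}$ --- its $e_1$-axis is stretched by exactly $(1+\tfrac{a\eps^2}{\kappa})$ --- so that the Caccioppoli estimate for the effective equation naturally produces the correct dimensionless combination $\tfrac{\eps}{R}(1+\tfrac{a\eps^2}{\kappa})$ without further loss.
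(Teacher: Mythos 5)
Your proposal follows essentially the same route as the paper: compute the explicit correctors, observe that the time-correctors and $\mathbf{k}$ vanish and $\chi_{e_2}\equiv 0$, collapse the two-scale expansion, express the error $f_\eps$ in divergence form, and close with a parabolic energy estimate in the anisotropic cylinder, using heat-kernel regularity (or equivalently Caccioppoli/interior estimates) for the constant-coefficient effective equation, and handling the nonzero boundary data of $\tilde\theta_\eps-\theta_\eps$ by a maximum-principle/lifting argument. Your observation that the skew part of $\mathbf{a}^\eps$ does not enter the coercivity and that the adaptation of the cylinder to $\bar{\mathbf{a}}$ produces the dimensionless combination $\tfrac{\eps}{R}(1+\tfrac{a\eps^2}{\kappa})$ with a linear (rather than quadratic) dependence is exactly the mechanism the paper exploits.

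One small slip: your explicit formula for $f_\eps$ drops the $\mathbf{m}_{e_2}$ term. Even though $\chi_{e_2}\equiv 0$, the flux corrector $\mathbf{m}_{e_2}$ does not vanish: $g_{e_2}(t,y)=\mathbf{a}(y)e_2-\bar{\mathbf{a}}e_2=\mathbf{s}(y)e_2=-a\eps^2\sin(2\pi y_2)\,e_1\neq 0$, so $\mathbf{m}_{e_2}(t,y)=-\tfrac{a\eps^2}{2\pi}\cos(2\pi y_2)\,\Itwo^\perp$ contributes the term $-\eps\,\ddiv\bigl(\mathbf{m}_{e_2}(x/\eps)\nabla\partial_2\theta\bigr)$ to $f_\eps$. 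This term has size $a\eps^3\|\partial_{22}\theta\|$, which is subdominant to the $\mathbf{m}_{e_1}$ contribution $\simeq\tfrac{a^2\eps^5}{\kappa}\|\partial_1\nabla\theta\|$ in the regime $\tfrac{a\eps^2}{\kappa}\gg 1$, so it does not change your final $H^{-1}$ bound or the conclusion~\eqref{eq:wamboozle}; but if you intend to reconstruct $f_\eps$ from~\eqref{e.advec.s.f}, the term should be kept.
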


\begin{proof}[Brief proof of Lemma~\ref{lem:Taylor:quantitative}]
We wish to appeal to the parabolic two-scale expansion described in~\S\ref{sec:two-scale}. We have already computed the correctors with slope $e$, namely $\chi_e$, in~\eqref{eq:chi:e:shear:def}. Since $\mathbf{a} = \mathbf{a}(y_2)$ and $\chi_e = \chi_e(y_2)$ are both independent of $t$, the space homogenized matrix $\bar{\mathbf{a}}(t)$ is time independent, and is equal to $\bar{\mathbf{a}}$ for all $t\in \RR$. Hence, the matrix $\mathbf{k}(t) = 0$ for all $t\in \RR$, and the time correctors also vanish identically, $h_e(t,\cdot) = 0$ for all $t\in \RR$. Then, it follows that $g_{e_1}(t,y) = - \frac{a^2 \eps^4}{2\kappa} \cos(4\pi y_2) e_1$ and $g_{e_2}(t,y) = - a \eps^2 \sin(2\pi y_2) e_1$, so that the skew-symmetric matrices $\mathbf{m}_e(t,\cdot)$ are given explicitly by $\mathbf{m}_{e_1}(t,y) = \tfrac{a^2 \eps^4}{8\pi \kappa} \sin(4\pi y_2)  \Itwo^\perp$ and $\mathbf{m}_{e_2}(t,y) = - \frac{a \eps^2}{2\pi} \cos(2\pi y_2) \Itwo^\perp$.

Using the explicit computations in the previous paragraph, the equation~\eqref{e.advec.s.hom} for $\theta$ may also be written as 
\begin{equation}
\partial_t \theta - \kappa \bigl( 1 + \tfrac{a^2 \eps^4}{2\kappa^2} \bigr) \partial_{11} \theta - \kappa \partial_{22} \theta = 0 \,.
\label{eq:theta:eqn:for:shear}
\end{equation}
Moreover, 
the parabolic two-scale expansion is then given by~\eqref{e.twoscale.wep.para} as 
\begin{equation}
\label{eq:theta:tilde:for:shear}
 \tilde{\theta}_\eps(t,x):=
 \theta(t,x) + \eps   \partial_1 \theta(t,x) \chi_{e_1}\bigl(\tfrac{x_2}{\eps} \bigr)  
 = \theta(t,x) + \tfrac{a \eps^3}{2\pi \kappa} \cos\bigl(\tfrac{2\pi x_2}{\eps} \bigr)  \partial_1 \theta(t,x)\,.
\end{equation}
Additionally, the ``error'' term $f_\eps$ defined in~\eqref{e.advec.s.f} may be computed explicitly as
\begin{align}
f_\eps(t,x) 
&=-  \eps 
\ddiv \left( 
\left( \chi_{e_1} \mathbf{a} + \mathbf{m}_{e_1}  \right)\bigl(\tfrac{x_2}{\eps} \bigr)
\nabla \partial_1\theta(t,x)
+
\mathbf{m}_{e_2}\bigl(\tfrac{x_2}{\eps} \bigr)
\nabla \partial_2 \theta(t,x) 
 \right)
+ \eps 
 \chi_{e_1}\bigl(\tfrac{x_2}{\eps} \bigr) \partial_1 \partial_t \theta(t,x) 
 \notag\\
 &=-  \eps 
\ddiv \left( 
\tfrac{a \eps^2}{2\pi} \cos(\tfrac{2\pi x_2}{\eps}) \nabla \partial_1\theta(t,x)
-  \tfrac{a^2 \eps^4}{8\pi \kappa} \sin(\tfrac{4\pi x_2}{\eps})  \nabla^\perp \partial_1\theta(t,x) 
 \right)
\notag\\
& \qquad
+  \eps 
\ddiv \left( 
 \tfrac{a \eps^2}{2\pi} \cos(\tfrac{2\pi x_2}{\eps})    
\nabla^\perp \partial_2 \theta(t,x) 
 \right)
+ \eps  \partial_1 \left( 
\tfrac{a \eps^2}{2\pi \kappa} \cos(\tfrac{2\pi x_2}{\eps})  \partial_t \theta(t,x) \right)
\notag\\
&=: \ddiv v_\eps(t,x)
 \,,
 \label{eq:forcing:in:divergence:form}
 \end{align}
for an implicitly defined vector $v_\eps$, and where we recall that $\nabla^\perp = \Itwo^\perp \nabla = (-\partial_2, \partial_1)$.

In order to conclude, we at last perform some PDE estimates. 
First, we need some standard bounds for~$\theta$ which are just standard pointwise estimates for the heat equation after a change of variables. These say that, for every~$\ell\in \{ 0,1,2,\ldots\}$,  
\begin{equation}
\label{e.pointwise.bounds}
(\kappa^{-\frac12} R)^{\ell} \| (\bar{\mathbf{a}}^{\frac12} \nabla)^\ell \theta \|_{L^\infty_{x,t}(Q_R)} 
+
(\kappa^{-\frac12} R)^{2\ell}
\| \partial_t ^\ell \theta \|_{L^\infty_{x,t}(Q_R)} 
\leq 
C_\ell
\| \theta \|_{\underline{L}^2_{x,t}(Q_{2R})}
\,. 
\end{equation}
Next, we record a bound which is immediate from~\eqref{eq:forcing:in:divergence:form} and standard global energy estimates for~\eqref{eq:theta:eqn:for:shear} namely
\begin{align}
\label{e.v.eps.estimate}
\norm{v_\eps}_{L^2_{x,t}(Q_R)} 
&\les
\bigl( a\eps^3 {+} \tfrac{a^2\eps^5}{\kappa} \bigr) \! \norm{ \partial_{1}\nabla \theta}_{L^2_{x,t}(Q_R)} 
+
 a\eps^3 \! \norm{ \partial_{22} \theta}_{L^2_{x,t}(Q_R)} 
+
\tfrac{a \eps^3}{ \kappa}\! \norm{\partial_t \theta}_{L^2_{x,t}(Q_R)} 
\notag\\
&\les \tfrac{\kappa \eps}{R^2 }  \bigl(1 + \tfrac{a \eps^2}{\kappa} \bigr)
\norm{\theta}_{L^2_{x,t}(Q_{2R})}
\,,
\end{align}
where the implicit constant in the $\les$ symbol is universal. 
With this bound, we return to \eqref{eq:its:a:linear:world} and recall the parabolic problem 
\begin{equation}
\partial_t(\tilde {\theta}_\eps - \theta_\eps) - \ddiv \bigl( \mathbf{a}^\eps \nabla (\tilde{\theta}_\eps - \theta_\eps) \bigr) = \ddiv v_\eps
\,.
\label{eq:its:a:linear:world:again}
\end{equation} 
Note that  \eqref{eq:theta:tilde:for:shear} gives $(\tilde{\theta}_\eps - \theta_\eps)(t,x) = \frac{a\eps^3}{2\pi\kappa} \cos(\frac{2\pi x_2}{\eps}) \partial_1 \theta(t,x)$ for  $(t,x) \in \partial_{\mathrm{par}}Q_{R}$, 
and so $\tilde{\theta}_\eps - \theta_\eps$ does not vanish identically on $\partial_{\mathrm{par}}Q_{R}$; instead we have
\begin{align*}
\Vert \tilde{\theta}_\eps - \theta_\eps\Vert _{L^\infty_{x,t}(\partial_{\mathrm{par}}Q_{R})} 
& \les \tfrac{a \eps^3}{\kappa} \norm{\partial_1 \theta}_{L^\infty(Q_{R})}
\leq 
\tfrac{\eps}{R}  \| \theta  \|_{\underline{L}_{x,t}^2(Q_{2R})}
 \,.
\end{align*}
The right side is bounded by that of~\eqref{eq:wamboozle}. Therefore, by the maximum principle, we can correct this discrepancy at the boundary (and just ignore it for our purposes).

We continue therefore by pretending that~$\tilde{\theta}_\eps - \theta_\eps$ vanishes on~$\partial_{\mathrm{par}}Q_R$, and return to~\eqref{eq:its:a:linear:world:again} to perform a standard energy estimate. We obtain:
\begin{align*}
\| \tilde{\theta}_\eps - \theta_\eps\|_{L^\infty_t L^2_x(Q_{R})}^2
+ 
\kappa \| \nabla( \tilde{\theta}_\eps - \theta_\eps)\|_{L^2_{x,t}(Q_{R})}^2
\les \kappa^{-1} \| v_\eps \|_{L^2_{x,t}(Q_{R})}^2
\,.
\end{align*}
We continue by ignoring the gradient term on the left side and appeal to~\eqref{eq:theta:tilde:for:shear},~\eqref{e.pointwise.bounds} and~\eqref{e.v.eps.estimate} to obtain
\begin{align*}
\| \tilde{\theta}_\eps - \theta \|_{L^\infty_t \underline{L}^2_x(Q_{R})}
&
\leq
\| \tilde{\theta}_\eps - \theta_\eps\|_{L^\infty_t \underline{L}^2_x(Q_{R})} + 
\| \theta_\eps - \theta \|_{L^\infty_t \underline{L}^2_x(Q_{R})}
\notag \\ & 
\les 
\kappa^{-\frac12} (\kappa^{-1} R^2 )^{\frac12}  \| v_\eps \|_{\underline{L}^2_{x,t}(Q_{R})}
+
\tfrac{\eps}{R} \norm{\theta}_{\underline{L}^2(Q_{2R})}
\les
 (1 + \tfrac{a\eps^2}{\kappa} ) \tfrac{\eps}{R}  \norm{\theta}_{\underline{L}^2_{x,t}(Q_{2R})} 
 \,.
\end{align*}
This concludes the proof of~\eqref{eq:wamboozle}.
\end{proof}

\section{An outline of the proof of Theorem~\ref{thm:main}}
\label{sec:proof:outline}

The proof consists of two parts: the construction of multi-scale the vector field $u$ (see~\S\ref{sec:construct}), and the analysis of the passive scalars $\theta^\kappa$ via quantitive homogenization (see~\S\ref{sec:kappa:m:theta:m}--\S\ref{sec:proof:conclude}). Throughout this section, it is instructive to keep in mind the intuition gained from the discussion in~\S\ref{sec:hom}.

\subsection{Construction of the vector field \texorpdfstring{$u$}{u}}
\label{sec:construct}
The proof of Theorem~\ref{thm:main}  is the most difficult in the lowest nontrivial dimension, namely $d=2$, for the same geometric  reasons that the Onsager conjecture was hardest to establish in the two-dimensional case~\cite{GiriRadu}. We present the $d=2$ construction next.

The advantage of the case $d=2$ is notational: zero-mean divergence-free vector fields $u$ are given as the rotated gradient of a scalar stream function $\phi$ (with associated anti-symmetric matrix $\Itwo^\perp \phi$); as such, we avoid the notational care one must use when handling vectors, or when working with the $\curl$ operator.

For $m\geq 0$, we recursively construct a sequence of periodic, in $(t,x) \in[0,1] \times \TT^2  $, $C^\infty_{x,t}$ smooth vector fields $u_m = \nabla^\perp \phi_m$, and then define $u = \lim_{m\to \infty} u_m$; here the limit is taken with respect to the  $C^\alpha_{x,t}$ topology. We initialize the construction with $\phi_0=0$, so that $u_0=0$. For any $m\geq 1$, having constructed the smooth,  periodic, incompressible vector field $u_{m-1}$ with associated stream function $\phi_{m-1}$,  the induction step $m-1\mapsto m$ consists in the construction of a suitable stream function ``increment'' $\phi_{m}-\phi_{m-1}$. For this purpose, we make the following initial parameter choices:
\begin{itemize}[leftmargin=*]
\item  Let $\{ \eps_m \}_{m\geq 0}$ be a decreasing sequence of scales, with $\eps_m \to 0$ as $m\to \infty$. We set $\eps_0 = 1$, and $\eps_1 = \Lambda^{-1}$ for a large integer $\Lambda \gg 1$, chosen at the end of the proof. At each step $m\geq 1$, we view $\eps_{m}$ as the ``fast scale'' of oscillation,  and $\eps_{m-1}$ as the ``slow scale''.\footnote{The discussion in~\S\ref{sec:dispersion} refers to the fast scale $\eps$ and the slow scale $1$. As such, in this new context it is the ratio $\frac{\eps_{m}}{\eps_{m-1}} < 1 $ which plays the role of the small parameter $\eps$ in \S\ref{sec:dispersion}.}  Based on previous experience with multi-scale constructions in turbulence (see e.g.~\cite{DLSz13}), we expect that it will be technically (very!) convenient to consider the sequence $\eps_m$ to decay ``slightly super-exponentially'', meaning that 
\begin{equation}
\label{eq:eps:m:def}
\eps_{m} \simeq \eps_{m-1}^q \,, 
\qquad \mbox{for some} \qquad 
0 < q- 1 \ll 1 \,,
\end{equation}
for all $m\geq 2$.
Here, the ``$\simeq$'' is not an ``$=$'' sign because we  need $\eps_m^{-1}$ to be an integer for all $m\geq 0$ (in order to enforce $\TT^2$--periodicity). How small the positive parameter $ q-1$ needs to be taken is chosen as part of the proof.

\item We let $a_{m}\gg 1$ denote the Lipschitz norm (up to universal constants) of the stream function increment $\phi_{m}-\phi_{m-1}$. If we think of the increment $\phi_{m}-\phi_{m-1}$ as oscillating at frequency $\eps_{m}$, then in order to ensure uniform boundedness of the velocity fields $\{u_m\}_{m\geq 0}$ in $C^0_t C^\alpha_x$, it is natural and necessary to define 
\begin{equation}
\label{eq:a:m:def}
a_{m} := \eps_{m}^{\alpha -1} \,, \qquad \mbox{for all} \qquad m \geq 0 \,.
\end{equation}
Note that since $\alpha < 1$, we have $a_m \to \infty$ as $m\to \infty$.
\end{itemize}

\subsubsection{A naive attempt for all \texorpdfstring{$0< \alpha < 1$}{0<alpha<1}}
\label{sec:naive:construction}
With $\{\eps_m\}_{m\geq 0}$ and $\{a_m\}_{m\geq 0}$ fixed according to \eqref{eq:eps:m:def}--\eqref{eq:a:m:def}, and keeping in mind the result of~Lemma~\ref{lem:Taylor:quantitative}, a ``naive'' attempt at defining the stream function increment $\phi_{m}-\phi_{m-1}$ is as follows.

Based on~\eqref{eq:psi:a:eps:def}  we are tempted to declare that the stream function increment is given by $a_{m} \eps_{m}^2 \sin(\frac{2\pi x_2}{\eps_{m}})$. Note however that in this case the homogenized matrix~\eqref{eq:bf:a:bar:shear} presents an enhancement only in the $e_1 \otimes e_1$ component, which is undesirable. 
The natural fix to this issue is to alternate the direction of shearing, by considering a stream function which sequentially in time alternates the stream functions $a_{m} \eps_{m}^2 \sin(\frac{2\pi x_2}{\eps_{m}})$ and $a_{m} \eps_{m}^2 \sin(\frac{2\pi x_1}{\eps_{m}})$, ``glued'' by a partition of unity in time, with characteristic time-scale proportional to some $0< \tau_{m}  \ll 1$, to be determined. We would like however these shears acting on different directions to not interact/interfere with each other (see also~\cite{Daneri,Isett,BDLSZV}), and hence we leave room in which no shearing occurs, whenever we switch directions. As such, we define 
\begin{equation*}
\Psi_k(x) := \begin{cases}
\sin(2\pi x_1)\,, & k \equiv 1 \mbox{ mod } 4 \,,\\
\sin(2\pi x_2)\,, & k \equiv 3 \mbox{ mod } 4\,,\\
0 \,, & k \equiv 0 \mbox{ mod } 2 \,,
\end{cases} 
\end{equation*}
and we attempt to declare
\begin{equation}
\label{eq:phi:increment:bad}
\phi_{m}(t,x) - \phi_{m-1}(t,x) 
:= \sum_{k\in \ZZ} 
\zeta \Bigl(\tfrac{t}{\tau_{m}}-k \Bigr) 
a_{m} \eps_{m}^2 
\Psi_k \Bigl(\tfrac{x}{\eps_{m}} \Bigr)
\,,
\end{equation}
where $\mathbf{1}_{[-\frac 13, \frac 13]} \leq \zeta \leq \mathbf{1}_{[-\frac 2 3, \frac 23]}$ is a $C^\infty$ smooth unit-scale bump function with $\int_{\RR} \zeta^2(t) dt  = 1$, and which induces a partition of unity via  $\sum_{k\in \ZZ} \zeta(\cdot - k) \equiv 1$.

By construction, the stream function increment in~\eqref{eq:phi:increment:bad} is $4 \tau_{m}$--periodic in time, and since the formula~\eqref{eq:abar:def} for the homogenized matrix  involves averaging in time, we expect that enhancement will occur in both the $e_1 \otimes e_1$ and $e_2 \otimes e_2$ components, as desired, with the  ``amount of enhancement'' being 25\% of the one in~\eqref{eq:bf:a:bar:shear}. Note however that in order to see this enhancement, Lemma~\ref{lem:Taylor:quantitative} dictates that we ``zoom out''  in both time and space; in turn, this imposes restrictions on $\tau_{m}$ and $\eps_{m}$.

The first condition in~\eqref{eq:scaling:stitches} dictates a constraint on the time scale $\tau_{m}$, which determines the periodicity in time (inverse frequency) of the stream function increment in~\eqref{eq:phi:increment:bad}. We need the amount of time that each of the stream functions $\Psi_k$ are active to be long enough to allow for homogenization to be observed; this dictates:
\begin{equation}
\label{eq:tau:m:constraint}
\tau_{m} 
\gg  
a_{m}^{-1} \cdot \bigl( 1 + \tfrac{a_{m} \eps_{m}^2}{\kappa_{m}} \bigr)  
=
a_{m}^{-1} + \tfrac{\eps_{m}^2}{\kappa_{m}}
\geq 
\tfrac{\eps_{m}^2}{\kappa_{m}}
\,.
\end{equation}
The second condition in~\eqref{eq:scaling:stitches} dictates that the length scale corresponding to ``slow oscillations'' (meaning, the scale of oscillation of the ``slow'' stream function $\phi_{m-1}$),  is large enough for homogenization to be observed; this dictates:
\begin{equation}
\label{eq:eps:m:constraint}
\eps_{m-1} 
\gg  
\eps_{m} \cdot \bigl( 1 + \tfrac{a_{m} \eps_{m}^2}{\kappa_{m}} \bigr) 
\geq \tfrac{a_{m} \eps_{m}^3}{\kappa_{m}}
\,.
\end{equation}

With~\eqref{eq:tau:m:constraint}--\eqref{eq:eps:m:constraint}, Lemma~\ref{lem:Taylor:quantitative} indicates that upon ``zooming out'' from scale $\eps_{m}$ to scale $\eps_{m-1}$,\footnote{It is this $m \mapsto m-1$ homogenization step which goes from small scales to larger ones, and this is the motivation for referring to the cascade of effective diffusivities as being   ``backwards''.}  the enhancement of diffusion is given by $\kappa \Itwo \mapsto \kappa \Itwo + \frac 18 \kappa  \cdot (\frac{a_{m} \eps_{m}^2}{\kappa})^2  \Itwo $; the division  by $4$  when compared to~\eqref{eq:bf:a:bar:shear} is due to time-averaging.

If the above-described  scenario can be justified rigorously, iteratively in $m$, then we have set up the parameters $\{\eps_m\}_{m\geq 0}$, $\{a_m\}_{m\geq 0}$, and $\{\tau_m\}_{m\geq 0}$, such that when ``coarse-graining'' at scale $\eps_{m}$\footnote{See~\cite{AK24,ABK24} for a general theory of \emph{quantitative theory of coarse-graining} for elliptic and parabolic PDEs with multi-scale oscillating coefficients.} the advection-diffusion equation~\eqref{eq:drift:diffusion}, with vector field $u = \nabla^\perp \phi$, with $\phi$ defined by summing over $m\geq 0$ the increments in~\eqref{eq:phi:increment:bad}, experiences an effective diffusivity $\kappa_m$ (see~\S\ref{sec:kappa:m:theta:m}), which satisfies the recursion relation 
\begin{equation}
\label{eq:kappa:m:recursion}
\kappa_{m-1}:= \kappa_{m} + \tfrac 18 \kappa_{m}  \cdot \bigl(\tfrac{a_{m} \eps_{m}^2}{\kappa_{m}}\bigr)^2
\,.
\end{equation}
The recursion~\eqref{eq:kappa:m:recursion} is to be ``initialized'' with $\kappa_M = \kappa$, for an integer $M \gg 1$ which is to be determined such that $\eps_M$ is proportional to the dissipation length-scale associated with the advection-diffusion equation~\eqref{eq:drift:diffusion} with a $C^\alpha_{x,t}$ vector field $u$. With this initialization, the recursion~\eqref{eq:kappa:m:recursion} is to be solved backwards: $M \to M-1 \to \ldots \to m \to m-1 \to \ldots \to 0$, with the hope that $\kappa = \kappa_M < \kappa_{M-1} < \ldots < \kappa_{m} < \kappa_{m-1} < \ldots < \kappa_0$, and that $\kappa_0=O(1)$ as $\kappa\to 0$. 

Let us next inquire whether the parameter choices~\eqref{eq:eps:m:def}, \eqref{eq:a:m:def}, together with the recursion~\eqref{eq:kappa:m:recursion}, are consistent with the constraints~\eqref{eq:tau:m:constraint}, \eqref{eq:eps:m:constraint}, and with the fact that $\kappa_0 = O(1)$.  Assuming for simplicity that $\kappa = \kappa_M = \eps_M^{1+\alpha+\gamma}$ for some $M\geq 1$\footnote{That is, we are letting $\kappa = \kappa_M \to 0$ along the designated sequence $\eps_M^{1+\alpha+\gamma} \to 0$, as $M\to \infty$.}; then, it is not hard to verify that the relation
\begin{equation}
\label{eq:kappa:m:explicit}
\kappa_m \simeq \eps_m^{1+\alpha+\gamma}\,,
\qquad 
\mbox{where}
\qquad
\gamma = (q-1) \tfrac{1+\alpha}{q+1} \,,
\end{equation}
for all $M \geq m \geq 0$, 
is consistent with \eqref{eq:kappa:m:recursion}, where we recall that $0< q-1 \ll 1$. Moreover, setting $m=0$ in \eqref{eq:kappa:m:explicit} yields $\kappa_0 \simeq 1$, as desired, independently of the value of $M \gg 1$ (and hence of $\kappa\ll 1$). 

The constraint~\eqref{eq:tau:m:constraint} is then used as to pick a time-scale that satisfies
\begin{equation}
\tau_m \gg a_m^{-1} + \eps_m^2 \kappa_m^{-1} = \eps_m^{1-\alpha} + \eps_m^{1-\alpha-\gamma} \simeq \eps_m^{1-\alpha-\gamma} \,.
\label{eq:tau:m:lower:bound}
\end{equation}
Such a choice is permissible if $\tau_m\leq 1$\footnote{The choice of $a_m$ in \eqref{eq:a:m:def}, together with the definition~\eqref{eq:phi:increment:bad} implies the uniform-in-$m$ space regularity estimate of the velocity increments: $\| u_m - u_{m-1}\|_{C^0_t C^{\alpha}_x} = \|\nabla^\perp (\phi_m - \phi_{m-1})\|_{C^0_t C^\alpha_x} \simeq a_m \eps_m^{1-\alpha} \simeq 1$. On the other hand, if we wish to estimate uniform-in-$m$ time regularity of the velocity increments, we see that $\| u_m - u_{m-1}\|_{C^{\alpha}_t C^{0}_x} = \|\nabla^\perp (\phi_m - \phi_{m-1})\|_{C^\alpha_t C^0_x} \simeq \tau_m^{-\alpha} a_m \eps_m  \simeq \tau_m^{-\alpha} \eps_m^\alpha \ll \eps_m^{\alpha(\alpha+\gamma)}$, where in the last inequality we have used~\eqref{eq:tau:m:lower:bound}. For any $\alpha>0$ and $q>1$ we thus obtain $C^\alpha_t C^0_x$--regularity of our vector field for free.} for all $m\geq 0$, i.e., as soon as $1-\alpha-\gamma >0$. In light of the definition of $\gamma$ in \eqref{eq:kappa:m:explicit}, this is indeed possible \emph{for all values $\alpha\in(0,1)$}, by letting $1 < q < \frac{1}{\alpha}$. Note that as $\alpha \to 1^-$, this implies $q \to 1^+$.
 
\subsubsection{What goes wrong with this naive construction?} 
\label{sec:mean:flow:killer}
The discussion in \S\ref{sec:naive:construction} indicates that for any $\alpha \in (0,1)$ we may construct a $C^{\alpha}_{x,t}$ vector field (with stream function obtained by summing \eqref{eq:phi:increment:bad} in $m\geq 1$), such that the sequence of renormalized diffusivities are compatible with the recursion relation~\eqref{eq:kappa:m:recursion}, and the space-time separation is compatible with~\eqref{eq:tau:m:constraint} and~\eqref{eq:eps:m:constraint}, as dictated by Lemma~\ref{lem:Taylor:quantitative}; additionally, the amount of diffusion experienced by the passive scalar when ``coarse-grained'' at $O(1)$ scales is itself $O(1)$, independently of the value of the molecular diffusion $\kappa \ll 1$. 

In discussing the above heuristic, \emph{a subtle (and fatal) point was overlooked}: the homogenization of a shear flow discussed in Lemma~\ref{lem:Taylor:quantitative} \emph{assumed that the shear flow $u = u_{a,\eps}$ had zero-mean} when averaged over one periodic cell of the fast variable $y=\frac{x}{\eps}$. Indeed, the $\cos(2\pi y_2)$ present in the definition of $u_{a,\eps}$ (see~\eqref{eq:u:a:eps:def}) ensures that this velocity field has zero-mean over $\TT^2(dy)$. It was the very fact that $\langle u_{a,\eps} \rangle = 0$ that allowed us to write this velocity field as the rotated gradient of a periodic stream function $\psi_{a,\eps}$ (see~\eqref{eq:psi:a:eps:def}), which was at the core of our homogenization analysis.

Returning to the construction in \S\ref{sec:naive:construction}, for each  $m \geq 1$ we note that the velocity field $u_m$ \emph{does not have zero-mean when averaged over a periodic cell of the fast variable} $\frac{x}{\eps_m}$. Indeed, the ``macroscopic'' velocity field $u_{m-1}$ \emph{is essentially a constant} from the point of view of the ``microscopic'' velocity increment $u_m - u_{m-1}$, which is $(\eps_m \TT)^2$--periodic. To be precise,    recursion~\eqref{eq:phi:increment:bad} shows that we have $\fint_{(\eps_m \TT)^2} (u_m-u_{m-1})(t,\cdot) = 0$ for all $t\in \RR$, while for any $1 \leq j \leq m-1$, we have\footnote{Here we use $\fint_{0}^{\eps_m} \cos(\frac{2\pi x_1}{\eps_{m-j}}) dx_1 = \eps_{m-j}^{\alpha+1} (2\pi \eps_m)^{-1}  \sin(\frac{2\pi \eps_m}{\eps_{m-j}}) \simeq \eps_{m-j}^\alpha$, since $\frac{\eps_m}{\eps_{m-j}} \ll 1$.} $\fint_{(\eps_m \TT)^2} (u_{m-j}-u_{m-j-1})(t,\cdot) \simeq  \eps_{m-j}^\alpha \neq 0$, for    $t \in \cup_{k\in 2 \ZZ + 1} [\tau_{m-j} (k - \frac 13),\tau_{m-j} (k+\frac 13)]$. Note that $\eps_{m-j}^\alpha \gg \eps_m^\alpha$ for any $1\leq j \leq m-1$, so not only is the macroscopic velocity field $u_{m-1}$ essentially a constant from the point of view of microscopic oscillations, in relative terms it is a \emph{large constant}.

The issue is that presence of a large constant drift velocity, superimposed on a zero-mean shear flow, is able to remove most of the enhancement of the diffusivity; this is so because generically this constant drift will point in an irrational/Diophantine direction, causing \emph{averaging to overtake homogenization}. This is explained quantitatively in~\cite[Appendix~A]{AV24}. In the physics literature this phenomenon---that a non-constant mean flow interferes and potentially destroys Taylor diffusion---is termed the ``sweeping effect''~\cite{AvM2a,MK,EB}. To sum up, the construction presented in  \S\ref{sec:naive:construction} does not work, at least not ``as is''.

\subsubsection{Small-scale features are transported by the large-scale flow: \texorpdfstring{$\alpha < \frac 13$}{alpha<1/3}}
\label{sec:construct:Lagrangian}
To overcome the issue raised at the end of~\S\ref{sec:mean:flow:killer} we recall one of the cornerstones of phenomenological turbulence theories, the observation that microscopic ``eddies'' are transported by the surrounding macroscopic flow, for a suitable amount of time~\cite{Frisch}. 

Roughly speaking, the idea put forth in~\cite{AV24}\footnote{In the context of the Onsager conjecture, this idea (a proto-version of it) was introduced first in~\cite{DLSz13}, and played a major role in all subsequent works, leading to~\cite{Isett,BDLSZV,NovackVicol,GiriRadu,GiriKwonNovack}. In the context of homogenization of advection-diffusion equations, a version of this idea appeared in~\cite{MPP}.}  is that the microscopic/fast shear flows should \emph{not} be naively superimposed on top of the existing macroscopic/slow shear flows (as~\eqref{eq:phi:increment:bad} dictates); \emph{instead} the microscopic/fast shear flows should be added in a frame of reference that moves according to the macroscopic/slow velocity field (which is to say, in the Lagrangian coordinates of this slow velocity), so that in this frame the shear flows continue to have zero mean (to leading order).

For this purpose, for each $[0,1]\times\TT^2$--periodic, divergence-free  velocity field $u_m$, we define the associated flow maps $X_m = X_m(t,x;s)$ by solving the ODE
\begin{equation*}
\tfrac{d}{dt} X_m(t,x;s) = u_m(t,X_m(t,x;s)) \,,
\qquad 
X_m(s,x;s) = x \,,
\end{equation*}
where $t,s \in [0,1]$ and $x\in \TT^2$.  Here we use the notation introduced in~\S\ref{sec:construct}; in particular, for each $m\geq 1$ the velocity fields $u_m \in C^\infty_{x,t}$. We shall denote by $X_m^{-1} = X_m^{-1}(x,t;s)$ the corresponding inverse maps (sometimes referred to as the ``back-to-labels map''), satisfying the relations $X_m^{-1}(t,X_m(t,x;s) ;s) = x$ and $X_m(t,X_m^{-1}(t,x;s);s) = x$.\footnote{Note that the maps $x \mapsto X_m(t,x;s)$ and $x \mapsto X_m^{-1}(t,x;s)$ are volume-preserving (since $\ddiv u_m = 0$), while the maps $x \mapsto X_m(t,x;s)-x$ and $x \mapsto X_m^{-1}(t,x;s)-x$ are $[0,1]\times\TT^2$--periodic (since so is $u_m$).} Classical ODE theory dictates that the Lagrangian maps $x\mapsto X_m(\cdot,x;\cdot)$ and their inverse flows $x\mapsto X_m^{-1}(\cdot,x;\cdot)$ remain close to the identity map, in the sense that $|\nabla X_m(x,t;s) - \Itwo| + |\nabla X_m^{-1}(x,t;s) - \Itwo| \ll 1$, only for time intervals which are small multiples of the Courant-Friedrichs-Lax time, namely for $|t-s| \ll \| \nabla u_m \|_{L^\infty_{x,t}}^{-1}$. Thus, the usage of Lagrangian coordinates necessitates the introduction of a new time-scale $\{ \tau_{m}^{\prime\prime} \}_{m\geq 1}$, which needs to satisfy the relation\footnote{We note here a difference with the notation in~\cite{AV24}: there, $\tau_m^{\prime\prime}$ is chosen to be small with respect to the CFL time-scale of $u_{m-1}$, i.e.~$\tau_m^{\prime\prime} a_{m-1} \ll 1$. In this note we choose the slightly more intuitive notation, which relates $\tau_m^{\prime\prime}$ to the CFL time-scale of $u_m$, i.e.~$\tau_m^{\prime\prime} a_{m} \ll 1$; see~\eqref{eq:tau:m:cond:1}.}
\begin{equation}
\label{eq:tau:m:cond:1}
\tau_{m}^{\prime\prime} \|\nabla u_m \|_{L^\infty_{x,t}} \ll 1
\qquad 
\mbox{or, equivalently,}
\qquad
\tau_{m}^{\prime\prime} a_m = \tau_{m}^{\prime\prime} \eps_m^{\alpha-1} \ll 1
\,.
\end{equation}
According to this new time-scale, for every $\ell \in \ZZ$ we denote\footnote{As before, we have a difference with the notation with respect to~\cite{AV24}: here we write $X_{m}(\cdot,\cdot; \ell \tau_m^{\prime\prime})$, whereas in~\cite[Equation~(2.36)]{AV24} we wrote $X_{m-1}(\cdot,\cdot; \ell \tau_m^{\prime\prime})$.}
\begin{equation*}
X_{m,\ell}(t,x):= X_m(t,x;\ell\tau_m^{\prime\prime})\,,
\qquad \mbox{and} \qquad 
X_{m,\ell}^{-1}(t,x):= X_m^{-1}(t,x;\ell\tau_m^{\prime\prime})\,,
\end{equation*}
the forward and backward flows of $u_m$, initialized at time $\ell \tau_m^{\prime\prime}$.

With the above notation, for $m\geq 1$ we define the stream function increment by\footnote{If one wishes to prove Theorem~\ref{thm:main}, the parameters $a_m$ appearing in~\eqref{eq:phi:increment:good} need to become ``slow functions'' of space and time, oscillating on scales $\eps_{m-1}$ in $x$ and $\tau_{m-1}$ in $t$. The purpose of these slow functions in a convex-integration scheme is to cancel (through the mean of their squares) a leftover Euler-Reynolds stress error. We refer the reader to~\cite[Section 5]{BDLSZV} and ~\cite[Section 7.3]{Leipzig} for these details.}
\begin{equation}
\label{eq:phi:increment:good}
\phi_{m}(t,x) 
-\phi_{m-1}(t,x) 
:=
\sum_{k,\ell \in \ZZ} 
\zeta\left(\tfrac{t}{\tau_{m}}-k \right) 
\hat{\zeta}\left(\tfrac{t}{\tau_{m-1}^{\prime\prime}}-\ell \right) 
a_{m} \eps_{m}^2 
\Psi_k \Bigl(\tfrac{X_{m-1,\ell}^{-1}(t,x)}{\eps_{m}} \Bigr)
\,,
\end{equation}
where $\hat{\zeta}$ is another $C^\infty$ smooth unit-scale bump function, which smoothly approximates the cutoff function $\mathbf{1}_{[-\frac 12, \frac 12]}$, and such that  its integer shifts nearly form a partition of unity: $\sum_{\ell \in \ZZ}\hat{\zeta}(\cdot-\ell) \simeq 1$.\footnote{For technical reasons, the time-cutoff functions $\zeta$ and $\hat{\zeta}$ are chosen to satisfy a number of fine properties not mentioned here, in order to deal with the fast switching of shear flows and the slow switching of flow maps; see~\cite[Section 2.1]{AV24} for precise descriptions.} Note that the cutoff functions $\zeta(\cdot - k)$, the parameters $\eps_m$, $a_m$, $\tau_m$, and also the alternating shears $\Psi_k$ appearing in~\eqref{eq:phi:increment:good} are the same as in~\eqref{eq:phi:increment:bad}. At this stage we only assume that the time-scale $\tau_{m-1}^{\prime\prime}$ satisfies the smallness condition from~\eqref{eq:tau:m:cond:1}, but as we shall see next, a largeness constraint emerges (see~\eqref{eq:tau:m:cond:2} below). With the new definition of the stream function increment in~\eqref{eq:phi:increment:good} replacing the previous naive attempt from~\eqref{eq:phi:increment:bad}, the velocity increment is defined again as as $u_m - u_{m-1} = \nabla^\perp (\phi_m - \phi_{m-1})$. 

The usefulness of~\eqref{eq:phi:increment:good} is that on the support of each cutoff function $\hat{\zeta}(\frac{\cdot}{\tau_{m-1}^{\prime\prime}} -\ell)$, the ``Lagrangian increment'' $(\phi_m-\phi_{m-1})(\cdot,X_{m-1,\ell}(\cdot,x))$ behaves as $a_m \eps_m^2 \Psi_k(\frac{x}{\eps_m})$, which has zero-mean with respect to the fast space variable $\frac{x}{\eps_m}$. This overcomes the issue raised at the end of~\S\ref{sec:mean:flow:killer}. However, in order to use the same heuristics as in~\S\ref{sec:naive:construction}, leading to the recursion relation~\eqref{eq:kappa:m:recursion} and asymptotic~\eqref{eq:kappa:m:explicit} for the cascade of renormalized diffusivities, we need to ensure that the Lagrangian flow maps $X_{m-1,\ell}^{-1}(\cdot,\cdot)$ and the new time cutoffs $\hat{\zeta}(\frac{\cdot}{\tau_{m-1}^{\prime\prime}} -\ell)$ appearing in~\eqref{eq:phi:increment:good}, do not interfere with  the quantitative homogenization picture painted in~\S\ref{sec:naive:construction}.  This necessitates that the new periodic time oscillation which we have introduced (on time-scale $\tau_{m-1}^{\prime\prime}$) \emph{does not interfere}\footnote{More than that, this new time oscillation must itself be homogenized!} with the homogenization problem which produced~\eqref{eq:kappa:m:recursion}; put differently, this necessitates that from the point of view of a function oscillating at time-scale $\tau_m$, functions varying on time-scale $\tau_{m-1}^{\prime\prime}$ are essentially a constant, resulting in the new parameter constraint 
\begin{equation}
\label{eq:tau:m:cond:2}
\tau_{m-1}^{\prime\prime} \gg \tau_m . 
\end{equation}
Together, the constraint on the new time parameter $\tau_{m}^{\prime\prime}$ are
\begin{equation}
\label{eq:all:time:constraints}
\tau_m
\ll 
\tau_{m-1}^{\prime\prime}
\ll
a_{m-1}^{-1} 
\,.
\end{equation}
We also recall that $\tau_m$ needs to be chosen to satisfy~\eqref{eq:tau:m:constraint}.

At last, we try to see if we can choose the parameters in order  satisfy all of the above-mentioned constraints. The two inequalities in~\eqref{eq:all:time:constraints} indicate that a permissible choice for $\tau_{m-1}^{\prime\prime}$ can be made if and only if $\tau_m$ is chosen to satisfy $a_{m-1}^{-1} \gg \tau_m$. In turn, the lower bound for $\tau_m$ established previously in~\eqref{eq:tau:m:lower:bound} shows that such a permissible choice for $\tau_m$ is possible if and only if $a_{m-1}^{-1} \gg \eps_m^{1-\alpha-\gamma}$, where we recall that $\gamma$ is defined by~\eqref{eq:kappa:m:explicit}. Recalling the definition of $a_{m-1}$, and the $\eps_m$ to $\eps_{m-1}$ relation from~\eqref{eq:eps:m:def}, we arrive at the parameter constraint
\begin{equation}
\label{eq:alpha:one:third:emerges} 
\eps_m^{\frac{1-\alpha}{q}} 
\simeq 
\eps_{m-1}^{1-\alpha} 
=
a_{m-1}^{-1}
\gg 
\eps_m^{1-\alpha-\gamma} 
\,.
\end{equation}
Inequality \eqref{eq:alpha:one:third:emerges}  may be satisfied for all $m\geq 1$ if $\eps_1 = \Lambda^{-1}$ is chosen to be sufficiently small (to absorb the implicit constant in the $\gg$ symbol), and if $\frac{1-\alpha}{q} < 1-\alpha-\gamma$. Recalling that $\gamma = (q-1) \tfrac{1+\alpha}{q+1}$, that $\alpha \in (0,1)$ and $q>1$, \eqref{eq:alpha:one:third:emerges}  thus necessitates: 
\begin{equation*}
\tfrac{1-\alpha}{q} < 1-\alpha-\gamma
\Leftrightarrow
q \gamma  < (q-1)(1-\alpha)
\Leftrightarrow
\tfrac{q(1+\alpha)}{q+1}
< 1-\alpha
\Leftrightarrow
\alpha < \tfrac{1}{2q+1} \,.
\end{equation*}
Since $q>1$, we arrive at the \emph{Onsager-supercriticality constraint} $\alpha < \frac 13$ present in the statement of Theorem~\ref{thm:main}.

\subsubsection{The construction of \texorpdfstring{$u$}{u}: order of choosing parameters}
To sum up, for a given $\alpha \in (0,\frac 13)$, we first choose $q>1$ so that $(2q+1) \alpha <1$. Then, we define $\{\eps_m\}_{m\geq 0}$ according to~\eqref{eq:eps:m:def}, and $\{a_m\}_{m\geq 0}$ according to~\eqref{eq:a:m:def}. In light of the lower bound $\tau_m\gg \eps_m^{1-\alpha-\gamma}$ (expected from~\eqref{eq:tau:m:lower:bound}), and the upper bound $\tau_m\ll a_{m-1}^{-1}$ (implied by~\eqref{eq:all:time:constraints}), we choose $\tau_m^{-1} \gg 1$ to be an integer lying in the interval $(\eps_{m-1}^{\alpha-1}, \eps_m^{\alpha-1+\gamma})$.\footnote{\label{foot:tau:m} The choice $\tau_m^{-1} \in \mathbb{N}$ is made in order to ensure $[0,1]$--time periodicity of the respective time cutoffs. For instance, $\tau_{m}^{-1} \simeq \eps_{m}^{\frac{(\alpha-1)(q+1) + \gamma q}{2q} } $ is an allowable choice.} With $\tau_m$ chosen, we then return to~\eqref{eq:all:time:constraints} and let $(\tau_{m-1}^{\prime\prime})^{-1}$ to be an integer lying in the interval $(a_{m-1}, \tau_m^{-1})$.\footnote{For instance, an integer which satisfies $(\tau_{m}^{\prime\prime})^{-1} \simeq  \eps_{m}^{\frac{(\alpha-1)(q+3) + \gamma q}{4} }$ is an allowable choice.} With these choices, the stream functions  $\{\phi_m\}_{m\geq 0}$ are defined by the telescoping relation~\eqref{eq:phi:increment:good}. The parameter $\Lambda = \eps_1^{-1} \gg 1$ is chosen last, to absorb all $m$-independent constants appearing in our estimates.

The vector field $u$ is obtained as $u = \lim_{m\to\infty} u_m$, the limit being taken with respect to the $C^{\alpha^\prime}_{x,t}$ topology, for any $\alpha^\prime< \alpha$. Here, $u_m = \nabla^\perp \phi_m$, $\phi_m = \sum_{1\leq m^\prime\leq m} (\phi_m - \phi_{m-1})$, $\phi_0 = 0$, and the stream function increment is given by~\eqref{eq:phi:increment:good}. 

\begin{remark}[Genericity of the vector fields $u$ from Theorem~\ref{thm:main}]
\label{rem:dense}
The choice $\phi_0$ is arbitrary, made in~\cite{AV24} only for notational simplicity. The proof of Theorem~\ref{thm:main} works ``as is'' if we replace $\phi_0$ by any sufficiently smooth, zero-mean periodic stream function. Then, all we have to do  is to ensure that $\eps_1 = \Lambda^{-1}$ is chosen to be (much) smaller than the smallest scale of oscillation of this chosen smooth $\phi_0$. By telescoping the available bounds on $\|\phi_m - \phi_{m-1}\|_{C^1_{x,t}}$ (see~\cite[Proposition 2.2]{AV24}), we may then show that $\|\phi - \phi_0 \|_{C^1_{x,t}} = \| u - \nabla^\perp \phi_0\|_{C^0_{x,t}}\leq C \Lambda^{-\alpha}$, where $C= C(\alpha,q)>0$; this upper bound may be made arbitrarily small by choosing $\Lambda$ sufficiently large, showing that the class of vector fields $u$ for which Theorem~\ref{thm:main} holds is in fact \emph{dense}, in the class of all smooth divergence-free zero-mean vector fields, with respect to the $C^0_{x,t}$ topology. A stronger form of genericity of the family of vector fields $u \in C^{\alpha}_{x,t}$ which induce anomalous diffusion is obtained in~\cite{Leipzig}, see~Theorem~\ref{thm:Leipzig} above.
\end{remark}

One may show, see~\cite[Section 2.3]{AV24} that the family $\{\phi_m\}_{m\geq 0}$ is uniformly bounded in $C^{1,\alpha}_{x,t}$, that for each $m$ the stream function $\phi_m$ is real-analytic in space (uniformly in time), with radius of analyticity proportional to $\eps_m$, that the flow maps $X_m$ and their inverses $X_m^{-1}$ remain close to the identity on their natural timescale $\tau_m^{\prime\prime}$, and that they are also real-analytic with radius of analyticity proportional to $\eps_m$. That is to say, the family of stream functions $\{\phi_m\}_{m\geq 0}$ behave in the way we expect them to behave. 

\begin{remark}[Propagation of smoothness in the construction]
In~\cite{AV24} we have chosen to prove the quantitative real-analyticity of the stream functions $\phi_m$, of the Lagrangian flows $X_m$, and of their inverses $X_m^{-1}$, simply because it is true: the $\Psi_k$ are sine waves, which are certainly analytic functions, the Lagrangian flow maps of an analytic function are analytic, adding and composing analytic functions to each other preserves analyticity; one has to be careful however when estimating the  radius of analyticity of the resulting objects~\cite[Appendix B]{AV24}. This real-analyticity is however not essential for the proof: all that is needed is ``sufficiently high regularity'', to justify certain high-order expansions in the proof. Instead of working with real-analytic building blocks $\Psi_k$ (sine waves), it is sufficient to consider smooth ones, and then to add a ``mollification step'' to the proof, as is common in convex-integration schemes (see~e.g.~\cite[Section 2.4]{BDLSZV}, \cite[Section 5.3]{Buck2020}). This perspective needs to be taken if one is to work with building blocks $\Psi_k$ which are compactly supported in space, and this is precisely the perspective taken in the proof of Theorem~\ref{thm:Leipzig}, see~\cite[Section 7.1]{Leipzig}.
\end{remark}

\subsection{A backwards cascade of eddy diffusivities and homogenized problems}
\label{sec:kappa:m:theta:m}

With the vector field $u$ constructed in~\S\ref{sec:construct} it remains to rigorously justify our scale-by-scale homogenization picture, which dictates that when ``coarse-grained'' at scale $\eps_{m}$, the advection-diffusion equation~\eqref{eq:drift:diffusion}  with vector field $u$ experiences an effective diffusivity $\kappa_m$, which satisfies the recursion relation~\eqref{eq:kappa:m:recursion}, initialized with $\kappa_M = \kappa$ for a suitably chosen $M \geq 0$. This requires the introduction of a \emph{permissible set of diffusivities} (the set $\mathcal{K}$ mentioned in Theorem~\ref{thm:main}), and a  definition of ``coarse-graining'' (at least, as it pertains to the analysis discussed here).

In spite of the inherent instability of the recursion formula $\kappa \mapsto \kappa + \frac{a_m^2 \eps_m^4}{8 \kappa}$, see~\eqref{eq:kappa:m:recursion}, we have seen earlier in~\eqref{eq:kappa:m:explicit} that the relation $\kappa_m \simeq \eps_m^{1+\alpha+\gamma}$ is consistent with stably solving~\eqref{eq:kappa:m:recursion}. As such, we define the permissible set of diffusivities by
\begin{equation*}
 \mathcal{K}: = \cup_{m\geq 0} \bigl[ \tfrac 12 \eps_m^{1+\alpha+\gamma}, 2 \eps_m^{1+\alpha+\gamma}\bigr] \,.
\end{equation*}
For any given $\kappa \in \mathcal{K}$, we may identify a unique integer $M\geq 0$ such that 
\begin{equation}
\label{eq:kappa:m:real:1}
\kappa \in \bigl[ \tfrac 12 \eps_M^{1+\alpha+\gamma}, 2 \eps_M^{1+\alpha+\gamma}\bigr] \,,
\quad \mbox{and we define} \quad 
\kappa_M := \kappa \,.
\end{equation}
Then, for $m \in \{1, \ldots,M\}$, we  recursively define the \emph{renormalized diffusivities}
\begin{equation}
\label{eq:kappa:m:real:2}
\kappa_{m-1} :=  \kappa_m \Bigl( 1 + c_*\bigl ( \tfrac{a_m \eps_m^2}{ \kappa_m} \bigr)^2 \Bigr) \,.
\end{equation}
Here, $c_* > 0$ is a universal constant, which may differ from the $\frac 18$ value presented in~\eqref{eq:kappa:m:recursion} depending on the $L^2_t$--normalization of the time cutoffs $\zeta$ and $\hat{\zeta}$.

Note that~\eqref{eq:kappa:m:real:1}--\eqref{eq:kappa:m:real:2} defines a \emph{backwards cascade of eddy diffusivities},\footnote{It is verified in~\cite[Lemma 3.4]{AV24} that $\kappa_m \in \mathcal{K}$ for all $m \in \{0,\ldots,M\}$.} which is $O(1)$ at the unit scale, in the sense that
\begin{equation*}
     \kappa = \kappa_M \ll \kappa_{M-1} \ll ...\ll \kappa_m \ll \kappa_{m-1}  \ll ... \ll \kappa_0 \in [\tfrac 12,2] \,.
\end{equation*}
The above relation holds no matter how small the molecular diffusivity $0<\kappa\ll 1$ is; except that as $\kappa \to 0$ we have $M\to \infty$, and thus in this limit we need to perform infinitely many renormalization/quantitative homogenization steps. 

It is useful to keep in mind the relations 
\begin{equation*}
\tfrac{a_m \eps_m^2}{\kappa_m} 
\simeq 
\eps_m^{-\gamma} \gg 1 \,,
\qquad  \qquad 
\tfrac{\eps_m^2}{\kappa_m\tau_m} 
\simeq
\eps_{m}^{ \frac{(1-\alpha (1+2 q) )(q-1) }{2q(q+1)}  }
\ll 1 \,,
\end{equation*}
which hold since $ \kappa_m \simeq \eps_m^{1+\alpha+\gamma} \simeq a_m \eps_m^2 \cdot \eps_m^\gamma$, since $\tau_m$ is given by the asymptotic in Footnote~\ref{foot:tau:m}, and since $\alpha (2q+1) <1$.

Having defined the permissible set of diffusivities $\mathcal{K}$, and the sequence $\{\kappa_m\}_{m=0}^{M}$ (see~\eqref{eq:kappa:m:real:1}--\eqref{eq:kappa:m:real:2}), it thus remains to define what we mean by  ``coarse-graining'' the advection-diffusion equation~\eqref{eq:drift:diffusion}  at scale $\eps_m$. A precise and robust definition of this concept, in a much more general setting is given in~\cite{ABK24, AK24}. For the purposes of proving Theorem~\ref{thm:main},  we are able to get away with a simpler perspective based on  \emph{iterated homogenization}, available since the construction of the field $u$ relies solely on space-time periodic building blocks.

Fix $\kappa = \kappa_M \in \mathcal{K}$, and define $\theta^\kappa$ to be the solution the advection-diffusion equation~\eqref{eq:drift:diffusion} with drift velocity $u$ and smooth initial condition $\theta_{\mathsf{in}}$. Then, for each $m\in \{0,\ldots,M\}$, define $\theta_m$ to be the solution of the advection-diffusion equation with the same initial datum, but with \emph{drift velocity given by $u$ mollified at scale $\eps_m$} (which in our context means that the velocity is taken to equal $u_m$), and with diffusivity parameter $\kappa_m$ (as defined by \eqref{eq:kappa:m:real:1}--\eqref{eq:kappa:m:real:2}). That is, $\theta_m$ is defined as the solution of
\begin{equation}
\label{eq:theta:m:evo}
\partial_t \theta_m + u_m \cdot\nabla\theta_m - \kappa_m \Delta \theta_m =0 \,,
\qquad 
\theta_m|_{t=0} = \theta_{\mathsf{in}} 
\,,
\end{equation}
for each $m\in \{0,\ldots,M\}$.
With this notation, we say that 
\begin{center}
\emph{the operator $\partial_t + u\cdot \nabla - \kappa \Delta$ ``coarse-grains'' at scale $\eps_m$ to the operator $\partial_t + u_m \cdot \nabla - \kappa_m \Delta$ },
\end{center} 
if we are able to show that we have
\begin{equation}
\label{eq:coarse:graining}
\tfrac{1}{C_*} \cdot \kappa_m \|\nabla \theta_m\|_{L^2_{t,x}}^2 \leq 
\kappa \|\nabla \theta^\kappa\|_{L^2_{t,x}}^2 
\leq C_* \cdot \kappa_m \|\nabla \theta_m\|_{L^2_{t,x}}^2 \,,
\end{equation}
for some universal constant $C_* > 1$ (independent of $m$). 
If we are able to prove \eqref{eq:coarse:graining},  then since $u_0 = 0$ and $\kappa_0 \simeq 1$, by the energy balance for \eqref{eq:theta:m:evo} with $m=0$, and appealing to the Poincar\'e inequality,  we have $\kappa_0 \|\nabla \theta_0\|_{L^2_{t,x}}^2 \simeq \varrho \|\theta_{\mathsf{in}}\|_{L^2}^2$ for some $\varrho \in (0,\frac 12]$; this would then prove  Theorem~\ref{thm:main}. The proof of~\eqref{eq:coarse:graining} is described next.

\subsection{The heart of the matter: one step of the homogenization iteration} 
\label{sec:key}

In order to establish~\eqref{eq:coarse:graining} we take an iterative approach: using quantitative homogenization (akin to~\S\ref{sec:dispersion}) for each $m \in \{1,\ldots,M\}$ we show that the amount of energy dissipation experienced by $\theta_m$ is comparable to the amount of energy dissipation experienced by $\theta_{m-1}$. In this ``one step'' of the homogenization process we make an error, and we aim to prove that these errors are summable in $m$, leading to~\eqref{eq:coarse:graining}. 

Lemma~\ref{lem:hom:one:step} (below) summarizes this ``one step'' of homogenization induction, and this is the key step in the proof of Theorem~\ref{thm:main}. Its proof consists of  comparing the equations for~$\theta_m$ and~$\theta_{m-1}$ by homogenizing the oscillations at scale~$\eps_m$. The fact that we can only get away with worrying about one scale at each step of the proof is because the scales~$\eps_m$ are supergeometric, and thus becoming more well-separated as they get smaller:~\eqref{eq:eps:m:def} shows that ~$\eps_{m} / \eps_{m-1} \simeq \eps_{m-1}^{q-1} \to 0$ as $m\to \infty$. This is an undesirable and non-physical feature of our model, but is necessary for such a ``naive'' homogenization procedure to work, since we rely on the summability (in $m$) of the errors we make. In more realistic examples, one should expect to see true geometric scale separation, and this necessitates examining the interaction between \emph{many} scales at each step of a renormalization iteration, and dealing with errors which are always of order one. There has been some recent progress~\cite{AK24,ABK24} in developing a general \emph{quantitative theory of coarse-graining} for elliptic and parabolic operators, which is based on quantitative homogenization theory and is capable of analyzing such models---using a strategy which is broadly similar to our proof of Theorem~\ref{thm:main}. 

The precise statement of the main homogenization estimate in our induction is given in~\cite[Proposition 5.2]{AV24}; here we offer a simplified version (see also~Remark~\ref{rem:all:data}):
\begin{lemma}[The homogenization iteration]
\label{lem:hom:one:step} Fix $\alpha \in (0,\frac 13)$. There exists a small parameter $\delta = \delta(\alpha) \approx (q-1)^2 > 0$ and a universal constant $C= C(\alpha)>0$, such that if $\eps_1 = \Lambda^{-1}$ is sufficiently small (in terms of $\alpha$ alone) then the following statement holds. Assume the zero-mean initial datum $\theta_{\mathsf{in}}$ is real-analytic, with analyticity radius $R_{\theta_{\mathsf{in}}}\in (0,1]$ which satisfies $R_{\theta_{\mathsf{in}}} \geq \eps_1^{1+\frac{\gamma}{2}}$ (recall, $\gamma$ is defined in~\eqref{eq:kappa:m:explicit}).
For $\kappa \in \mathcal{K}$, define $M$ via~\eqref{eq:kappa:m:real:1}. 
Then, for all $m \in \{2,\ldots,M\}$ the solutions of~\eqref{eq:theta:m:evo} satisfy
\begin{equation}
\label{eq:big:lemma:1}
    \|\theta_m - \theta_{m-1}\|^2_{L^\infty([0,1];L^2(\TT^2))} +  \kappa_m \|\nabla \theta_m - \nabla \Tilde{\theta}_m\|^2_{L^2([0,1]\times\TT^2)} \leq C \eps^{2 \delta}_{m-1}   \|\theta_{\mathsf{in}}\|_{L^2(\TT^2)}^2
\end{equation}
and
\begin{equation}
\label{eq:big:lemma:2}
    \Bigg |\frac{\kappa_m \|\nabla \theta_m\|_{L^2_{t,x}}^2}{\kappa_{m-1}\|\nabla \theta_{m-1}\|_{L^2_{t,x}}^2} -1 \Bigg| \leq C \eps^\delta_{m-1}
    \,,
\end{equation}    
for a suitably defined function $\Tilde{\theta}_m$ (see~\eqref{e.actual.ansatz} below). Here $L^2_{t,x} = L^2([0,1]\times\TT^2)$.
\end{lemma}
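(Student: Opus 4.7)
My plan is to prove~\eqref{eq:big:lemma:1} as a quantitative two-scale homogenization estimate at the single scale ratio $\eps_m/\eps_{m-1}$: I view $\partial_t+u_m\cdot\nabla-\kappa_m\Delta$ as a fast-oscillating perturbation, at space-scale $\eps_m$ and time-scale $\tau_m$, of the effective operator $\partial_t+u_{m-1}\cdot\nabla-\kappa_{m-1}\Delta$ which drives $\theta_{m-1}$. The gap $\kappa_{m-1}-\kappa_m$ is produced by the Taylor dispersion of the shear building blocks in~\eqref{eq:phi:increment:good}; indeed the definition~\eqref{eq:kappa:m:real:2} of $\kappa_{m-1}$ is calibrated precisely so that the Taylor-dispersion correction of Lemma~\ref{lem:Taylor:quantitative} cancels the leading-order residual. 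Estimate~\eqref{eq:big:lemma:2} will drop out of~\eqref{eq:big:lemma:1} by comparing the energy identities~\eqref{eq:theta:balance} for $\theta_m$ and $\theta_{m-1}$.

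\textbf{Two-scale ansatz and residual.} Following the template~\eqref{e.twoscale.wep.para}, but with the fast variable pulled back by the slow Lagrangian flow in order to evade the sweeping effect discussed in~\S\ref{sec:mean:flow:killer}, I define
\begin{equation}
\label{e.actual.ansatz}
\tilde\theta_m(t,x) := \theta_{m-1}(t,x) + \eps_m \sum_{i,k,\ell} \zeta\bigl(\tfrac{t}{\tau_m}-k\bigr)\hat\zeta\bigl(\tfrac{t}{\tau_{m-1}^{\prime\prime}}-\ell\bigr)\partial_i\theta_{m-1}(t,x)\,\chi^{(m)}_{k,e_i}\bigl(\tfrac{X_{m-1,\ell}^{-1}(t,x)}{\eps_m}\bigr) + R_m^{(2)}(t,x),
\end{equation}
where $\chi^{(m)}_{k,e_i}$ is the shear corrector~\eqref{eq:chie:def} associated with the $k$-th shear $\Psi_k$ (for which there is an explicit formula analogous to~\eqref{eq:chi:e:shear:def}), and $R_m^{(2)}$ is the $O(\eps_m^2)$ second-order corrector involving the analogues of $\mathbf{k}$, $h_{e_i}$, and $\mathbf{m}_{e_i}$ from~\S\ref{sec:two-scale}. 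On each slab where $\hat\zeta(t/\tau_{m-1}^{\prime\prime}-\ell)\neq 0$, the pull-back of $u_m-u_{m-1}$ by $X_{m-1,\ell}$ is, up to relative error $\tau_{m-1}^{\prime\prime}a_{m-1}\ll 1$ by~\eqref{eq:tau:m:cond:1}, the zero-mean shear $\nabla^\perp\bigl(a_m\eps_m^2\Psi_k(\cdot/\eps_m)\bigr)$, so the cell-problem machinery of~\S\ref{sec:two-scale} applies in this Lagrangian frame. Substituting~\eqref{e.actual.ansatz} into $\partial_t+u_m\cdot\nabla-\kappa_m\Delta$, using~\eqref{eq:theta:m:evo} for $\theta_{m-1}$, the cell problems solved by the correctors, and the identity~\eqref{eq:kappa:m:real:2} to absorb the Taylor enhancement, I expect the leading-order terms to cancel, leaving a residual $f_m$ that decomposes into four types of contributions: (i) genuine two-scale terms of size $\eps_m$ times high derivatives of $\theta_{m-1}$, analogous to~\eqref{e.advec.s.f}; (ii) commutator errors of relative size $\tau_{m-1}^{\prime\prime}a_{m-1}$ coming from $\nabla X_{m-1,\ell}^{-1}-\Itwo$; (iii) transitional errors near $t=k\tau_m$ and $t=\ell\tau_{m-1}^{\prime\prime}$ produced by $\partial_t\zeta$ and $\partial_t\hat\zeta$; and (iv) slow-drift errors from Taylor-expanding $u_{m-1}$ around the Lagrangian trajectory, of relative size $\eps_m/\eps_{m-1}$.

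\textbf{Energy estimate and ratio bound.} Applying the parabolic energy estimate of Lemma~\ref{lem:Taylor:quantitative} to the difference $\tilde\theta_m-\theta_m$ (which solves a parabolic equation whose forcing $f_m$ can be written in $\ddiv$-form), and combining with the direct bound $\|\tilde\theta_m-\theta_{m-1}\|_{L^\infty_t L^2_x}\les(\eps_m/\eps_{m-1})\|\theta_{\mathsf{in}}\|_{L^2}$ coming from~\eqref{e.actual.ansatz}, yields~\eqref{eq:big:lemma:1}; the exponent $\delta\approx(q-1)^2$ is extracted from the strict margin in the parameter inequality~\eqref{eq:alpha:one:third:emerges} available once $\alpha<1/3$. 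To control the high derivatives of $\theta_{m-1}$ appearing in~(i), I will propagate quantitative real-analyticity of $\theta_{m-1}$ at scale $\eps_{m-1}$ inductively, which is the role of the hypothesis $R_{\theta_{\mathsf{in}}}\geq\eps_1^{1+\gamma/2}$. For~\eqref{eq:big:lemma:2}, subtracting the energy identities~\eqref{eq:theta:balance} for $\theta_m$ and $\theta_{m-1}$ gives $\kappa_m\|\nabla\theta_m\|_{L^2_{t,x}}^2-\kappa_{m-1}\|\nabla\theta_{m-1}\|_{L^2_{t,x}}^2=\tfrac12\bigl(\|\theta_{m-1}(1,\cdot)\|_{L^2}^2-\|\theta_m(1,\cdot)\|_{L^2}^2\bigr)$; the right side is bounded by $\|\theta_m-\theta_{m-1}\|_{L^\infty_t L^2_x}\|\theta_{\mathsf{in}}\|_{L^2}\les\eps_{m-1}^\delta\|\theta_{\mathsf{in}}\|_{L^2}^2$ via~\eqref{eq:big:lemma:1}, and one then divides by $\kappa_{m-1}\|\nabla\theta_{m-1}\|_{L^2_{t,x}}^2$, which is bounded below by a universal constant times $\|\theta_{\mathsf{in}}\|_{L^2}^2$: the base case $m=0$ is the pure heat equation with diffusivity $\kappa_0\asymp 1$, and the ratio bound itself propagates this lower bound upward with only multiplicative loss $\prod_j(1+C\eps_{j-1}^\delta)=O(1)$.

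\textbf{Main obstacle.} The hardest part is the control of the transitional errors~(iii): the Lagrangian-flow restart $X_{m-1,\ell}\mapsto X_{m-1,\ell+1}$ near $t=\ell\tau_{m-1}^{\prime\prime}$ introduces a near-discontinuity in the ansatz~\eqref{e.actual.ansatz} that must be smoothed away, and the resulting boundary-layer energy must be absorbed by the gain $\eps_{m-1}^\gamma$ from~\eqref{eq:alpha:one:third:emerges}. Doing so forces a careful choice of the cutoffs $\zeta,\hat\zeta$ (as alluded to in the footnote after~\eqref{eq:phi:increment:good}) and leans heavily on the hierarchy $\tau_m\ll\tau_{m-1}^{\prime\prime}\ll a_{m-1}^{-1}$: one needs the shear homogenization to run to completion on the time scale $\tau_{m-1}^{\prime\prime}$ of a single Lagrangian ``frame,'' yet the number of such frames on $[0,1]$ must remain large enough that the time-averaging entering~\eqref{eq:abar:def} still produces the full Taylor enhancement in both coordinate directions. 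The margin in~\eqref{eq:alpha:one:third:emerges} that forces $\alpha<1/3$ is precisely what keeps this accounting positive.
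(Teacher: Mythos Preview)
Your overall strategy---a quantitative two-scale expansion in Lagrangian coordinates, followed by parabolic energy estimates---matches the paper's, and your catalogue of error types (i)--(iv) is apt. But your ansatz~\eqref{e.actual.ansatz} has a structural gap that the paper explicitly flags as the ``naive'' attempt that fails. Taking~$\theta_{m-1}$ directly as the macroscopic function does not close: after the spatial cell problems for the shears~$\Psi_k$ are solved, the effective diffusion matrix is not~$\kappa_{m-1}\Itwo$ but a matrix~$\mathbf{K}_m(t)$ oscillating on time-scale~$\tau_m$ between an $e_1\otimes e_1$ enhancement, an $e_2\otimes e_2$ enhancement, and~$\kappa_m\Itwo$ on the rest intervals; a \emph{separate} time-homogenization is needed to average $\mathbf{K}_m(t)$ to~$\kappa_{m-1}\Itwo$. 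The macroscopic function paired with the twisted correctors must therefore be a function~$T_{m-1}$ solving the intermediate equation $\partial_t T_{m-1}+u_{m-1}\cdot\nabla T_{m-1}-\ddiv\bigl((\mathbf{K}_m+\mathbf{s}_{m-1})\nabla T_{m-1}\bigr)=0$, where $\mathbf{s}_{m-1}$ absorbs the Lagrangian distortion $\nabla X_{m-1,\ell}\circ X_{m-1,\ell}^{-1}-\Itwo$; your errors~(ii) are \emph{not} discardable small terms but must be built into this equation and themselves homogenized. Relatedly, the gradient in the corrector term must be the Lagrangian one, $(\nabla(T_{m-1}\circ X_{m-1,\ell}))\circ X_{m-1,\ell}^{-1}$, not the Eulerian $\partial_i\theta_{m-1}$. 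Worse, $T_{m-1}$ cannot be the \emph{exact} solution of this intermediate equation: the ellipticity ratio of~$\mathbf{K}_m(t)$ is~$\simeq\eps_m^{-2\gamma}\gg 1$, so that equation does not deliver the sharp high-derivative bounds your errors~(i) require. The paper constructs~$T_{m-1}$ (and the time corrector~$\tilde H_m$) by hand via a high-order iterative scheme, truncated at finite stage, so that~$T_{m-1}$ inherits the analyticity-scale regularity of~$\theta_{m-1}$ while solving the intermediate equation only approximately---and this truncation is exactly where the real-analyticity hypothesis on~$\theta_{\mathsf{in}}$ is spent.

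Your derivation of~\eqref{eq:big:lemma:2} from~\eqref{eq:big:lemma:1} via subtracted energy identities also fails as written. The numerator bound $\bigl|\kappa_m\|\nabla\theta_m\|^2-\kappa_{m-1}\|\nabla\theta_{m-1}\|^2\bigr|\les\eps_{m-1}^\delta\|\theta_{\mathsf{in}}\|^2$ is fine, but the denominator is \emph{not} bounded below by a universal multiple of~$\|\theta_{\mathsf{in}}\|^2$: your inductive anchor at~$m=1$ gives, via Poincar\'e and the energy balance, only $\kappa_1\|\nabla\theta_1\|^2\gtrsim\kappa_1\|\theta_{\mathsf{in}}\|^2$ with $\kappa_1\simeq\Lambda^{-(1+\alpha+\gamma)}$, so at $m=2$ your ratio bound is $\les\Lambda^{1+\alpha+\gamma-\delta}\gg 1$. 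The correct route uses the second term of~\eqref{eq:big:lemma:1} directly: from $\kappa_m\|\nabla\theta_m-\nabla\tilde\theta_m\|^2\les\eps_{m-1}^{2\delta}\|\theta_{\mathsf{in}}\|^2$ and the explicit structure of~$\nabla\tilde\theta_m$ (gradient of the macroscopic function plus corrector gradients), the identity $\kappa_m\spacetimeaverage{\nabla\chi_{e_i}\cdot\nabla\chi_{e_j}}=(\bar{\mathbf{a}}-\kappa_m\Itwo)_{ij}$ from~\eqref{eq:abar:sym} lets you compute $\kappa_m\|\nabla\tilde\theta_m\|^2\approx\kappa_{m-1}\|\nabla T_{m-1}\|^2\approx\kappa_{m-1}\|\nabla\theta_{m-1}\|^2$ with additive error $\les\eps_{m-1}^\delta\kappa_{m-1}\|\nabla\theta_{m-1}\|^2$, without ever dividing by a small quantity.
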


\begin{remark}[All real-analytic data]
\label{rem:all:data}
We show in~\cite[Proposition 5.2]{AV24} that
Lemma~\ref{lem:hom:one:step} holds for all zero-mean real-analytic data $\theta_{\mathsf{in}}$,\footnote{Note that Theorem~\ref{thm:main} is stated for $H^1(\TT^d)$--smooth initial data $\theta_{\mathsf{in}}$, not real-analytic one. However, upon mollification any $H^1(\TT^d)$ data can be well-approximated by a real-analytic one, and this suffices for the proof to close; see \S\ref{sec:proof:conclude}.} no matter how small their analyticity radius\footnote{We say that $f \colon \TT^d \to \RR$    is real-analytic with analyticity radius (at least) $R_f \in (0,1]$ if $\sup_{n\geq 0} \| \nabla^n f\|_{L^2(\TT^d)} R_f^n (n!)^{-1} \leq \|f\|_{L^2(\TT^d)}$.}$R_{\theta_{\mathsf{in}}}$ is. The only difference is that the bounds in~\eqref{eq:big:lemma:1} and~\eqref{eq:big:lemma:2} hold only for $m \in \{ m_{*}+1, \ldots, M \}$, where $m_{*} := \min \{ m \in \mathbb{N} \colon m\geq 1, R_{\theta_{\mathsf{in}}} \geq \eps_{m}^{1 + \frac{\gamma}{2}} \}$. 
\end{remark}

\begin{proof}[Discussion of the proof of Lemma~\ref{lem:hom:one:step}]
The proof of this lemma is the bulk of the paper~\cite{AV24}. It requires solving separate space and a time homogenization problems, due to the three different ``fast'' scales in the equation for $\theta_m$:  the smallest spatial scale is $\eps_m$, which is the length scale of oscillation of the shear flows;
the smallest time scale, on which the shear flows switch directions,  is given by $\tau_m$; lastly, we have the time scale $\tau_{m-1}^{\prime\prime}$ on which the Lagrangian flow maps $X_{m-1,\ell}$ must refresh. 
We should think of these three fast scales as being well-separated, with the spatial scale $\eps_m$ being the
smallest/fastest, and with $\tau_m \ll \tau_{m-1}^{\prime\prime}$ (see~\eqref{eq:tau:m:cond:2}).

The idea is the same as for the classical periodic case presented in \S\ref{sec:two-scale} and~\S\ref{sec:dispersion}. We want to make a two-scale ansatz $\tilde{\theta}_m$, similar to the one for~$\tilde{\theta}_\eps$ in~\eqref{e.twoscale.wep.para}. Here, the ``macroscopic'' function is~$\theta_{m-1}$, and akin to~\eqref{e.advec.s.approx}--\eqref{e.advec.s.f} we want to plug this two-scale  ansatz into the ``microscopic'' equation, which is the equation for~$\theta_m$ (see~\eqref{eq:theta:m:evo}). The hope is that the resulting error is small in $L^2_t H^{-1}_x$, so that we can obtain good estimates by appealing to parabolic regularity theory, as in the proof of Lemma~\ref{lem:Taylor:quantitative}.

The initial ``naive'' idea would be to define $\Tilde{\theta}_m$ by the two-scale ansatz 
\begin{equation*}
\Tilde{\theta}_m = \theta_{m-1} + 
\sum_{k,\ell \in \ZZ} 
\xi\left(\tfrac{\cdot}{\tau_{m}}-k \right) 
\hat{\xi}\left(\tfrac{\cdot}{\tau_{m-1}^{\prime\prime}}-\ell \right)   
\sum_{i \in \{1,2\}}
(\chi_{m,k,e_i} \circ X^{-1}_{m-1,\ell}) \partial_i \theta_{m-1}  + H_m
\,,
\end{equation*}
where the $\chi_{m,k,e_i}$ are fast periodic ``correctors'' with slope $e_i$,  which are  oscillating at scale $\eps_m$ and are obtained by solving a cell-problem related to the shear flow $\Psi_k$; the $H_m$ is responsible for homogenizing the time cutoffs, and the $\xi$, $\hat{\xi}$ are unit-scale time cutoffs that are slightly wider than $\zeta$ and $\hat{\zeta}$.  
Unfortunately, compared to the periodic setting, we face a number of additional complications and for these reasons, the definition above is not precise enough and cannot work. First, our equation is not $\eps_m$--periodic because of ``macroscopic'' perturbations from the scales~$\eps_{m-1}, \ldots,\eps_0$, including those caused by the Lagrangian flows. Moreover, we cannot just ``freeze'' the coefficients and make an error here; some of these macroscopic perturbations need to be homogenized along with the~$\eps_m$--scale oscillations. For these reasons, our two-scale ansatz is a bit more complicated. 

\smallskip

The \emph{actual} definition of~$\Tilde{\theta}_m$ which is used in~\eqref{eq:big:lemma:1} is given in~\cite[(4.24)]{AV24} as
\begin{align}
\label{e.actual.ansatz}
\Tilde{\theta}_m
&=
T_{m-1} 
+
\sum_{k,\ell \in \ZZ} 
\xi\left(\tfrac{\cdot}{\tau_{m}}-k \right) 
\hat{\xi}\left(\tfrac{\cdot}{\tau_{m-1}^{\prime\prime}}-\ell \right)   
\sum_{i\in\{1,2\}}
\tilde{\chi}_{m,k,e_i}
\bigl(
\nabla \bigl( T_{m-1} \circ X_{m-1,\ell} \bigr) \circ X_{m-1,\ell}^{-1} \bigr)_i
+
\widetilde{H}_{m}
\,,
\end{align}
where:
\begin{itemize}[leftmargin=*]
\item The ``twisted correctors''~$\tilde{\chi}_{m,k,e_i}$ play the role of the fast periodic correctors with slope $e_i$. These  are simply the periodic correctors for the horizontal and vertical shears at scale $\eps_m$, denoted by $\chi_{m,k,e_i}$ (see~\cite[(3.6)]{AV24}), evaluated along the inverse flow $X_{m-1,k}^{-1}$  (see~\cite[(4.26)]{AV24}). Note that in~\eqref{e.actual.ansatz} the twisted correctors $\tilde{\chi}_{m,k,e_i}$, which are indexed by $k \in \ZZ$, are simply glued together in time by the partition of unity given by  $\xi(\frac{\cdot}{\tau_{m}}-k )$; all other terms in~\eqref{e.actual.ansatz} are indexed by $\ell \in \ZZ$. 
\item The $\tilde{H}_m$ is an approximate time corrector, see the below discussion and~\cite[(4.21)]{AV24}.
\item The function~$T_{m-1}$ (defined in~\cite[(4.13)]{AV24}) is a perturbation of the function~$\theta_{m-1}$. It is chosen to be an \emph{approximate} solution of the equation
\begin{equation}
\label{e.Tm.divform}
\partial_t T_{m-1} 
+ u_{m-1} \cdot \nabla T_{m-1}
- \nabla \cdot \bigl( 
\mathbf{K}_m
+ \mathbf{s}_{m-1} \bigr) \nabla T_{m-1} 
=0
\,, 
\qquad 
T_{m-1}|_{t=0} = \theta_{\mathsf{in}}
\,,
\end{equation}
where
\begin{itemize} [leftmargin=*]
\item The field~$\mathbf{K}_m(t) $ (see~\cite[(3.26)]{AV24}) is a time-oscillating  field that oscillates between the homogenized matrices for the vertical and horizontal shears; it averages to~$\kappa_{m-1}\Itwo$, plus a negligible error.

\item The field~$\mathbf{s}_{m-1}$ is defined as 
\begin{equation*}
\mathbf{s}_{m-1}:= 
\mathbf{K}_m
\sum_{\ell\in\ZZ}  
\hat{\xi}\left(\tfrac{\cdot}{\tau_{m-1}^{\prime\prime}}-\ell \right)   
\bigl( \nabla X_{m-1,\ell} \circ X_{m-1,\ell}^{-1} - \Itwo\bigr)
\,.
\end{equation*}
It is added to the diffusion matrix of the $T_{m-1}$ equation in order to deal with ``distortions'' caused by the Lagrangian flows; these distortions are small, but cannot be neglected in the equation for~$\theta_m$. 
\end{itemize}
Equation~\eqref{e.Tm.divform} is close to the equation for~$\theta_{m-1}$ (see~\eqref{eq:theta:m:evo} with $m$ replaced by $m-1$) in the sense that~$\mathbf{s}_{m-1} $ is small (due to~\eqref{eq:tau:m:cond:1}) and the time oscillating field~$\mathbf{K}_m(t) $ will homogenize to $\kappa_{m-1} \Itwo$, since time oscillations simply average.
\end{itemize}

\emph{A warning:} for technical reasons, we do \emph{not} define the function~$T_{m-1}$ to be a solution of equation~\eqref{e.Tm.divform}! The reason is that equation~\eqref{e.Tm.divform} does not possess sufficiently strong regularity estimates; this is because when compared to~$\kappa_{m-1}\Itwo$, the matrix~$\mathbf{K}_m$ has ellipticity constants which are much worse. Our regularity estimates for~$T_{m-1}$ must be very sharp, since it is the ``macroscopic'' function here and, as we saw in \S\ref{sec:two-scale} and \S\ref{sec:dispersion}, higher derivatives of the macroscopic function show up in the quantitative homogenization estimates. 
To fix this problem, we choose~$T_{m-1}$ to be an approximate solution of~\eqref{e.Tm.divform} which  we construct by hand, using a high-order iterative scheme. We cut off the iteration defining $T_{m-1}$ at a finite stage, ensuring that the regularity estimates we need to hold are valid, while making a very small error in the equation for~$T_{m-1}$.  It is in these high-order estimates for $T_{m-1}$ that we use the quantitative real-analyticity assumption of the initial data  present in the statement of Lemma~\ref{lem:hom:one:step} (we refer the reader to~\cite[Section 4.3]{AV24}).

Similarly, the function~$\tilde{H}_m$ in~\eqref{e.actual.ansatz} plays the role of the time corrector (analogous to~$h_e$ in~\S\ref{sec:two-scale}). However, as for~$T_{m-1}$, in order to ensure that we have good enough regularity estimates for~$\tilde{H}_m$, we actually construct it as an approximate time corrector using an iterative scheme, to ensure that it has better regularity properties. 

With the definition of the ansatz~\eqref{e.actual.ansatz} in hand, the argument in~\cite{AV24} proceeds by plugging~$\tilde{\theta}_{m}$ into the equation for~$\theta_m$, and checking that the right-hand side is small. The error is quite explicit, although we do not write it here since it takes half a page to display the nine error terms: see~\cite[(5.18)--(5.26)]{AV24}. It is then a tedious task to check that all these terms are suitably small in~$L^2_tH^{-1}_x$, but after doing so we obtain the bounds~\eqref{eq:big:lemma:1}--\eqref{eq:big:lemma:2} claimed in the lemma. 
\end{proof}

\subsection{The proof of Theorem~\ref{thm:main}: anomalous diffusion} 
\label{sec:proof:conclude}

To conclude this section, we show how Lemma~\ref{lem:hom:one:step} (and Remark~\ref{rem:all:data}) imply that the family of solutions $\{\theta^\kappa\}_{\kappa \in \mathcal{K}}$ of the advection-diffusion equation~\eqref{eq:drift:diffusion} with velocity $u$, exhibits anomalous diffusion in the sense of Definition~\ref{def:ad}, as promised in the statement of Theorem~\ref{thm:main}. For the proofs of the statement made in bullets three--six in Theorem~\ref{thm:main}, we refer the reader to~\cite{AV24}.

Assume for the moment that the function $\theta_{\mathsf{in}}$ is real-analytic.\footnote{At the end of the proof we will use the fact that $H^1(\TT^2)$ functions can be approximated by real-analytic ones, with an error that can be made quantitatively small in $L^2(\TT^2)$.} Denote by $R_{\theta_{\mathsf{in}}} \in (0,1]$ its analyticity radius, so that 
$\sup_{n\geq 0} \| \nabla^n \theta_{\mathsf{in}}\|_{L^2(\TT^d)} R_{\theta_{\mathsf{in}}}^n (n!)^{-1} \leq \|\theta_{\mathsf{in}}\|_{L^2(\TT^d)}$. Define $m_{*} \geq 1 $ such that $\eps^{1+\frac{\gamma}{2}}_{m_{*}} \simeq R_{\theta_{\mathsf{in}}}$; the precise definition of $m_{*}$ is given in Remark~\ref{rem:all:data}.
This definition, together with the relation $\kappa_m \simeq \eps_m^{1+\alpha+\gamma}$ for all $m\geq 0$ (see~\eqref{eq:kappa:m:explicit}), gives
\begin{equation}
\label{eq:kappa:m:in}
\kappa_{m_{*}} 
\simeq 
\eps_{m_{*}}^{1+\alpha+\gamma}
\simeq 
R_{\theta_{\mathsf{in}}}^{\frac{2(1+\alpha+\gamma)}{2+\gamma}}
\,.
\end{equation}
Relation~\eqref{eq:kappa:m:in} shows that $\kappa_{m_{*}}$  is a function of $R_{\theta_{\mathsf{in}}}$ alone (it is independent of the molecular diffusivity parameter $\kappa$). By the energy inequality for the $\theta_{m_{*}}$ evolution (that is,~\eqref{eq:theta:m:evo} with $m$ replaced by $m_{*}$), and the Poincar\'e inequality, we deduce
\begin{equation*}
\tfrac 12 \tfrac{d}{dt} \| \theta_{m_{*}} (t,\cdot)\|_{L^2(\TT^2)}^2 
= 
- \kappa_{m_{*}} \|\nabla \theta_{m_{*}} (t,\cdot)\|_{L^2(\TT^2)}^2 
\leq 
-  (2\pi)^2 \kappa_{m_{*}} \| \theta_{m_{*}} (t,\cdot)\|_{L^2(\TT^2)}^2 
\,.
\end{equation*}
In turn, the above estimate implies 
\begin{align}
\label{eq:kappa:m:*:diffusion}
 \kappa_{m_{*}} \|\nabla \theta_{m_{*}}\|_{L^2([0,1]\times \TT^2)}^2 
 &= \tfrac 12 \|\theta_{\mathsf{in}}\|_{L^2(\TT^2)}^2 
 - \tfrac 12 \|\theta_{m_*}(1,\cdot)\|_{L^2(\TT^2)}^2 
 \geq  \tfrac 12 \|\theta_{\mathsf{in}}\|_{L^2(\TT^2)}^2 \bigl( 1- e^{-8\pi^2 \kappa_{m_*}} \bigr)
 \geq 2 \kappa_{m_*} \cdot \tfrac 12 \|\theta_{\mathsf{in}}\|_{L^2(\TT^2)}^2 
 \,.
\end{align}
Here we have used that without loss of generality $\kappa_{m_*} \leq \frac 13$ (since $m_* \geq 1$, and we can take $\Lambda$ to be sufficiently large), and so $1- e^{-8\pi^2 \kappa_{m_*}} \geq 2 \kappa_{m_*}$.

Next, take any $\kappa \in \mathcal{K}$, and define $M = M(\kappa) \geq 0$ according to \eqref{eq:kappa:m:real:1}, so that $\kappa = \kappa_M \simeq \eps_M^{1+\alpha+\gamma}$; in particular, as $\kappa \to 0$ we have $M\to\infty$, and so $M > m_*$ for all sufficiently small $\kappa$ (since $m_*$ is only a function of $R_{\theta_{\mathsf{in}}}$, which is fixed). By  Remark~\ref{rem:all:data}, the bounds \eqref{eq:big:lemma:1}--\eqref{eq:big:lemma:2} hold for all $m$ in the nontrivial range $\{m_* + 1,\ldots,M\}$. For instance, by telescoping  estimate \eqref{eq:big:lemma:2}, we obtain that  for all $m\in \{m_*,\ldots,M\}$ 
\begin{equation*}
\prod_{m = m_*+1}^M (1 - C \eps_{m-1}^\delta) 
\leq 
\frac{\kappa_m \|\nabla \theta_m\|_{L^2([0,1]\times \TT^2)}^2}{\kappa_{m_{*}} \|\nabla \theta_{m_{*}}\|_{L^2([0,1]\times \TT^2)}^2 } 
\leq 
\prod_{m = m_*+1}^M (1 + C \eps_{m-1}^\delta) \,.
\end{equation*}
The super-geometric growth  $\eps_m \simeq \eps_{m-1}^q$ with $q>1$, together with the fact that $\eps_1 = \Lambda^{-1}$ is chosen sufficiently small (in terms of $\alpha$), implies  
\begin{equation*}
    \frac{3}{4} \leq \prod^\infty_{m = 2} (1 \pm C\eps^\delta_{m-1}) \leq \frac{4}{3}
    \,,
\end{equation*}
and thus for all $m\in \{m_*,\ldots,M\}$, we have
\begin{equation}
\label{eq:main:thm:est2}
\tfrac{3}{4}\kappa_{m_*} \|\nabla \theta_{m_*}\|_{L^2([0,1]\times \TT^2)}^2 
\leq 
\kappa_M \|\nabla \theta_{M}\|_{L^2([0,1]\times \TT^2)}^2 
\leq 
\tfrac{4}{3} \|\nabla \theta_{m_*}\|_{L^2([0,1]\times \TT^2)}^2
\,.
\end{equation}
Estimate~\eqref{eq:main:thm:est2} is used with $M = m_*$ in the proof.

In view of the previously established lower bound~\eqref{eq:kappa:m:*:diffusion}, it remains to relate the term $\kappa_M \|\nabla \theta_{M}\|_{L^2([0,1]\times \TT^2)}^2 = \kappa \|\nabla \theta_{M(\kappa)}\|_{L^2([0,1]\times \TT^2)}^2$ appearing in~\eqref{eq:main:thm:est2}, to the true energy dissipation rate appearing in~\eqref{eq:anomaly}, namely $\kappa \|\nabla \theta^\kappa \|_{L^2([0,1]\times \TT^2)}^2$. In order to achieve this, we note that $\theta^\kappa - \theta_{M}$ satisfies the equation
\begin{equation*}
(\partial_t - \kappa \Delta + u \cdot \nabla)(\theta^\kappa - \theta_M)
= \ddiv \bigl( (\phi - \phi_M) \nabla \theta_M \bigr) 
\,,
\qquad 
(\theta^\kappa - \theta_M)|_{t=0}=0
\,.
\end{equation*}
Since by construction we have $\|\phi - \phi_{M}\|_{L^\infty_{x,t}} \leq C\eps^{(1+\alpha)q}_{M} \ll 1$ (which follows by telescoping the trivial bound for~\eqref{eq:phi:increment:good}), a simple energy estimate gives
\begin{equation}\label{eq:main:thm:est3}
\|\theta^\kappa - \theta_{M}\|_{L^\infty([0,1];L^2(\TT^2))}^2 
+ 
\kappa \|\nabla \theta^\kappa - \nabla \theta_{M}\|_{L^2([0,1]\times\TT^2)}^2 
\leq 
\tfrac{1}{8} \kappa \|\nabla \theta_{M}\|_{L^2([0,1]\times\TT^2)}^2 
    \,.
\end{equation}
Combining \eqref{eq:kappa:m:in},  \eqref{eq:kappa:m:*:diffusion}, \eqref{eq:main:thm:est2}, \eqref{eq:main:thm:est3}, and recalling that $\kappa = \kappa_M$, we deduce
\begin{align}
\label{eq:proof:almost:over}
\kappa \|\nabla \theta^\kappa \|_{L^2([0,1]\times\TT^2)}^2 
&\geq \kappa \|\nabla \theta_{M}\|_{L^2([0,1]\times\TT^2)}^2  
- \kappa \|\nabla \theta^\kappa - \nabla \theta_{M}\|_{L^2([0,1]\times\TT^2)}^2 
\notag \\
&\geq \tfrac{7}{8} \kappa_M \|\nabla \theta_{M}\|_{L^2([0,1]\times\TT^2)}^2 
\geq \tfrac{7}{8} \cdot \tfrac{3}{4} \cdot \kappa_{m_*} \|\nabla \theta_{m_*}\|_{L^2([0,1]\times\TT^2)}^2  
\notag \\
&\geq \kappa_{m_*} \cdot \tfrac{1}{2}  \|\theta_{\mathsf{in}}\|_{L^2(\TT^2)}^2 
\geq C_* R_{\theta_{\mathsf{in}}}^{\frac{2(1+\alpha+\gamma)}{2+\gamma}} \cdot \tfrac{1}{2}  \|\theta_{\mathsf{in}}\|_{L^2(\TT^2)}^2 
    \,.
\end{align}
In order to complete the proof of anomalous diffusion for all zero-mean data in $H^1(\TT^2)$, we need to address the real-analyticity assumption used in~\eqref{eq:proof:almost:over}, which explicitly contains the analyticity radius $R_{\theta_{\mathsf{in}}}$.

For this purpose, for any $\theta_{\mathsf{in}} \in H^1(\TT^2)$  we define $L_{\mathsf{in}}:= \frac{\|\theta_{\mathsf{in}}\|_{L^2}}{\|\nabla \theta_{\mathsf{in}}\|_{L^2}}$. Then, we mollify $\theta_{\mathsf{in}} $ at spatial scale $\lambda L_{\mathsf{in}}$, for a parameter $\lambda$ which is to be determined; that is, we define
\begin{equation*}
\tilde{\theta}_{\mathsf{in}}
:=
\theta_{\mathsf{in}} \ast \Phi(\lambda^2 L_{\mathsf{in}}^2,\cdot)
\,,
\end{equation*}
where $\Phi(t,x)$ is the standard heat kernel (with unit diffusion coefficient). The function $\tilde{\theta}_{\mathsf{in}}$ is real-analytic, and its analyticity radius is given by $R_{\tilde{\theta}_{\mathsf{in}}} = c \lambda L_{\mathsf{in}}$, for a universal constant $c>0$. Moreover, standard properties of the heat kernel  $\Phi$ imply that 
\begin{align}
\label{eq:analytic:approximation}
\| \theta_{\mathsf{in}}\|_{L^2(\TT^2)}^2 
\geq
 \| \tilde{\theta}_{\mathsf{in}}\|_{L^2(\TT^2)}^2 
&\geq 
\| \theta_{\mathsf{in}}\|_{L^2(\TT^2)}^2 
- C \lambda^2 L_{\mathsf{in}}^2 \|\nabla \theta_{\mathsf{in}}\|_{L^2(\TT^2)}^2
\geq 
 \bigl( 1 - C \lambda^2\bigr) \| \theta_{\mathsf{in}}\|_{L^2(\TT^2)}^2 
 \,.
\end{align}
To conclude, we denote by $\tilde{\theta}^\kappa$ the solution of the advection-diffusion equation~\eqref{eq:drift:diffusion} with initial condition $\tilde{\theta}_{\mathsf{in}}$. By linearity, we have that $(\partial_t + u\cdot \nabla -\kappa \Delta)(\tilde{\theta}^\kappa - \theta^\kappa) = 0$, and so by the standard energy balance for this difference (recall~\eqref{eq:theta:balance}), the triangle inequality, the bound~\eqref{eq:proof:almost:over} with $\theta^\kappa$ replaced by $\tilde{\theta}^\kappa$ and $\theta_{\mathsf{in}}$ replaced by $\tilde{\theta}_{\mathsf{in}}$, and with the bounds in \eqref{eq:analytic:approximation},  we deduce 
\begin{align}
\kappa \| \nabla \theta^\kappa\|_{L^2([0,1]\times\TT^2)}^2
&\geq 
\kappa \| \nabla \tilde{\theta}^\kappa\|_{L^2([0,1]\times\TT^2)}^2
-
\kappa \| \nabla (\tilde{\theta}^\kappa -\theta^\kappa) \|_{L^2([0,1]\times\TT^2)}^2
\notag\\
&\geq C_* R_{\tilde{\theta}_{\mathsf{in}}}^{\frac{2(1+\alpha+\gamma)}{2+\gamma}} \cdot \tfrac{1}{2}  \|\tilde{\theta}_{\mathsf{in}}\|_{L^2(\TT^2)}^2 
- \tfrac 12 \|\tilde{\theta}_{\mathsf{in}} -\theta_{\mathsf{in}} \|_{L^2(\TT^2)}^2 
\notag\\
&\geq \tfrac{C_*}{2} (c \lambda L_{\mathsf{in}})^{\frac{2(1+\alpha+\gamma)}{2+\gamma}} \cdot \tfrac{1}{2}  \|\theta_{\mathsf{in}}\|_{L^2(\TT^2)}^2 
- C \lambda^2 \cdot \tfrac 12 \|\theta_{\mathsf{in}} \|_{L^2(\TT^2)}^2 \,,
\label{eq:finally}
\end{align}
assuming that $C \lambda^2 \leq \frac 12$. The bound~\eqref{eq:finally} dictates that we choose $\lambda$ small enough ensure that the first term on the right side dominates; this amounts to letting 
\begin{equation*}
\lambda \leq c_* L_{ \mathsf{in}}^{ \frac{1+\alpha+\gamma}{1-\alpha}} 
\,,
\end{equation*}
for some universal constant $0 < c_* \ll 1$ which depends on $c, C$, and $C_*$. Inserting the above bound into~\eqref{eq:finally} implies 
\begin{equation*}
\kappa \| \nabla \theta^\kappa\|_{L^2([0,1]\times\TT^2)}^2
\geq  \underbrace{\Bigl( \tfrac{C_*}{4} (c c_* )^{\frac{2(1+\alpha+\gamma)}{2+\gamma}}\Bigr) \cdot L_{\mathsf{in}}^{\frac{2(1+\alpha+\gamma)}{1-\alpha}}}_{=: \varrho} \cdot \tfrac{1}{2}  \|\theta_{\mathsf{in}}\|_{L^2(\TT^2)}^2 
\,.
\end{equation*}
The dependence of the coefficient $\varrho$ on $L_{\mathsf{in}}$ claimed in the first bullet of Theorem~\ref{thm:main} now follows by letting $\eps = \frac{2\gamma}{1-\alpha} = \frac{2(q-1)(1+\alpha)}{(q+1)(1-\alpha)} \to 0 $ as $q\to 1$.

\section{Extensions and open problems}
\label{sec:open}

\subsection{Uniform H\"older regularity of the scalars}
\label{sec:uniformholder}

An important question which is not addressed by Theorem~\ref{thm:main} is the \emph{regularity} of the solutions~$\theta^\kappa$, in particular whether there is a uniform-in-$\kappa$ H\"older estimate such as
\begin{equation}
\label{e.Holder.beta}
\sup_{\kappa \in \mathcal{K}} 
\|  \theta^\kappa \|_{L^2([0,1]; C^{0,\bar{\alpha}}(\TT^d))}
< \infty\,.
\end{equation}
If we could show~\eqref{e.Holder.beta} for~$\bar{\alpha}$ arbitrarily close to~$\frac{1-\alpha}{2}$, then this would be resolve part (ii) of the Obukhov-Corrsin dichotomy discussed in \S\ref{sec:OC}.

To see why we should expect such an estimate, let us recall the Campanato characterization of the H\"older space~$C^{0,\beta}$, which states that\footnote{We denote by $\{\eta_r\}_{r>0}$ a standard family of mollifiers, with $\eta_r$ localized to scale $\simeq r$, and by~$\{ \eta_{r,s}\}_{r,s>0}$ a family space-time mollifiers at spatial scale~$r$ and time scale~$s$.} for any~$f \in H^1(\TT^d)$,  
\begin{equation}
\label{e.Campanato}
\bigl[ f \bigr]_{C^{0,\beta}(\TT^d)} 
\simeq 
\sup_{r\in (0,1)}  
r^{1-\beta} 
\| \nabla (f\ast\eta_r) \|_{L^\infty(\TT^d)} 
\,.
\end{equation}
What we have shown in Theorem~\ref{thm:main} (recall~\eqref{eq:main:thm:est2} and also~\eqref{eq:kappa:m:explicit}) is that 
\begin{equation*}
\| \nabla \theta_m \|_{{L}^2([0,1]\times\TT^d)} 
\approx \kappa_m^{-\frac12}
\simeq \eps_m^{-\frac{1+\alpha+\gamma}{2}},
\end{equation*}
where we ignore the dependence on the initial datum $\theta_{\mathsf{in}}$ of the implicit constants in the $\approx$ symbol, and we recall that~$\gamma>0$ is a very small exponent (proportional to~$q-1$, which can thus be made arbitrarily small in our construction of the vector field). 
While not explicitly stated, the estimate in Lemma~\ref{lem:hom:one:step} implies that, for~$j<m$, we have 
\begin{equation}
\label{e.conv}
\kappa_j
\| \nabla \theta_m \ast \eta_{\eps_j',\tau_j'''} - \nabla \theta_j \|^2_{{L}^2([0,1]\times\TT^d)} 
\les \eps_{j-1}^{2\delta} \,,
\end{equation}
where~$\eps_j'= \sqrt{ \eps_{j+1}\eps_{j}}$ and~$\tau_j''':= \sqrt{ \tau_{j} \tau_{j}''}$ are chosen so that~$\eps_j < \eps_{j'} < \eps_{j-1}$ and~$\tau_j'' < \tau_j''' < \tau_j$, with some amount of separation between each of these scales; again, we ignore the dependence on the initial datum $\theta_{\mathsf{in}}$ of the implicit constants in the $\les$ symbol.
In other words, we should consider~$\nabla \theta_j \simeq  \nabla \theta_m \ast  \eta_{\eps_j',\tau_j'''}$ if~$j<m$. Therefore the previous two displays imply the bound 
\begin{equation*}
\| \nabla (\theta_m \ast  \eta_{\eps_j',\tau_j'''}) \|_{{L}^2([0,1] \times \TT^d)} 
\approx \kappa_j^{-\frac12}
\simeq \eps_j^{-\frac{1+\alpha+\gamma}{2}}
\simeq 
(\eps_j')^{-\frac{1+\alpha}{2} -O(\gamma)}
\end{equation*}
\emph{If we could upgrade this bound from~$L^2$ to~$L^\infty$},\footnote{We would also need some uniform-in-time estimates to treat the time mollification.} then by~\eqref{e.Campanato} we would have the desired uniform H\"older estimate for~$\theta_m$ with regularity exponent~$\bar{\alpha} = 1 - \frac{1+\alpha}{2} - O(\gamma) = \frac{1-\alpha}{2} - O(\gamma)$. 

The aforementioned ``upgrade'' amounts to proving the Lipschitz-type estimate \begin{equation}
\label{e.Lipschitz}
\| \nabla \theta_m \|_{L^2_tL^\infty_x ([0,1] \times \TT^d)} 
\approx \kappa_m^{-\frac12}
\,,
\end{equation}
which states that the gradient field~$ \nabla \theta_m$ does not concentrate on sets of small measure: it is roughly the same size, everywhere in~$\TT^d$.  It is very natural to expect such an estimate to be true. First, at an intuitive level, since: (i) our vector field is built from periodic ingredients with supergeometric scale separation between successive scales (so that the main contribution of~$\nabla \theta_m$ are the wiggles at scale~$\eps_m$), and (ii) we have shown that the solutions have expansions with periodic ingredients, there is no reason to expect any such ``concentration'' to occur. 

Second, uniform Lipschitz estimates of exactly this type have played a central role in homogenization theory since the pioneering work of Avellaneda and Lin~\cite{AL1,AL2} in the~'80s; they now go by the name \emph{large-scale regularity theory} (see the monographs~\cite{AKMBook,ShenBook} and the references therein). 
While the arguments of Avellaneda and Lin~\cite{AL1,AL2} are based on compactness and apply only to equations with periodic coefficients and thus one active scale, a quantitative approach to large-scale regularity was later proposed in~\cite{AS16}; this method is more robust and applies to equations with non-periodic coefficients (such as almost periodic and random coefficients). 

The way it works heuristically is that the regularity of the homogenized equation is ``transferred'' to the equation with oscillating coefficients using quantitative homogenization estimates, and an excess decay iteration. In our setting, the ``homogenized equation'' would be the equation for~$\theta_{m-1}$ and the ``equation with oscillating coefficients'' would be the equation for~$\theta_m$. This sets us an induction \emph{going down the scales} in which the regularity for~$\theta_{m-1}$ is transferred to~$\theta_m$: a cascade of regularity, dual to the inverse cascade of homogenization. Such an iteration argument would obviously be very technical to implement and take a great deal of effort to write, but we have all the ingredients to implement it in our setting,\footnote{We would need a localized version of Lemma~\ref{lem:hom:one:step}, which applies not just in the whole torus~$\TT^2$, but in all appropriately scaled space-time cylinders above the scale at which we expect homogenization to occur. Such an estimate can be obtained by an argument similar to that of Lemma~\ref{lem:hom:one:step}, there will just be additional boundary layer terms to control.}  and we do not expect that formalizing it would require major new ideas beyond~\cite{AV24,AS16}. 

As part of the induction argument that proves~\eqref{e.Lipschitz}, we will also upgrade the estimate~\eqref{e.conv} to an~$L^\infty$-type estimate, and thus the uniform-in-$m$ H\"older estimate for~$\theta_m$ can be obtained. 

One nice consequence of such a H\"older estimate would be that the condition in Theorem~\ref{thm:main} that the initial data belong to~$H^1(\TT^d)$ would be removed, and the positive constant~$\varrho$ would be universal. This would then imply the exponential decay in time of the~$L^\infty(\TT^d)$-norm of our solutions~$\theta^{\kappa_j}$, uniformly along our subsequence~$\kappa_j$. 

\subsection{Geometric separation of scales and intermittency}

A major shortcoming of the vector field constructed in Section~\ref{sec:construct} is that the active scales~$\{ \eps_m\}_{m\geq 0}$ are supergeometrically separated. This nonphysical aspect was necessary for our proof of anomalous diffusion, based on iterative quantitative homogenization, to work. If we want the methods in~\cite{AV24} to apply to more realistic physical models, we need to develop a more flexible method.

Therefore, we think that it is an important open problem is to prove anomalous diffusion for a variant of our construction in which \emph{the scales~$\{ \eps_m \}_{m\geq 0}$ are geometrically separated},~i.e.,~$\eps_m = \exp(-Cm)$.

If the scales are geometrically separated, then we would make \emph{an error of order one at each scale} $m$---which we obviously cannot sum up over $m$! Moreover, we would have to deal with leading-order interactions between multiple scales at once. In contrast, the proof of Theorem~\ref{thm:main} is based on comparing the equation for~$\theta_{m-1}$ to the equation for~$\theta_{m}$; evidently the scales~$\eps_{m+1}$ and~$\eps_{m-1}$ do not interact directly, which would not be the case if they were geometrically separated. 

It is only due to the supergeometric scale separation that we should expect the H\"older estimate discussed in Section~\ref{sec:uniformholder} above to be valid. A geometric separation of scales---which implies the interaction between many scales at once---should lead to concentration effects and therefore intermittency. We would expect in this case that the~$L^p$ norm of~$\nabla \theta_m$ to be very different for different values of~$p$.

At first glance, these issues may seem fatal to our whole strategy based on homogenization theory. However, a general ``coarse-graining'' theory, based on quantitative homogenization methods, has been recently developed~\cite{AK22,AK24}. 
Based on this, and on an iterative quantitative homogenization argument, a superdiffusive central limit theorem was recently proved in~\cite{ABK24} for a passive scalar model with a random vector field (with non-separated scales). There is therefore some hope that iterative quantitative homogenization methods as in~\cite{AV24, ABK24} may apply to  more physically realistic models of scalar turbulence.

\subsection{A regularity threshold for Euler solutions exhibiting anomalous diffusion?}

We conjecture that the regularity threshold $\alpha = \frac 13$ present in Theorem~\ref{thm:main} is not an artifact of our proof in~\cite{AV24}, but rather, a delicate rigidity constraint imposed by the Lagrangian nature of turbulent diffusion. We propose the following problem:
\begin{conjecture}[Anomalous diffusion $+$ self advection $=$ Onsager super-criticality]
\label{eq:big:conj}
Let $d\in \{2,3\}$. \textbf{Assume} that $u \in C^0([0,1];C^\alpha(\TT^d)) \cap C^\alpha([0,1];C^0(\TT^d))$ is a weak solution of the incompressible Euler equations, for some H\"older regularity exponent $\alpha \in (0,1)$. Furthermore, \textbf{assume} that for any initial condition $\theta_{\mathsf{in}} \in \dot{H}^1(\TT^d)$, the family of solutions $\{\theta^\kappa\}_{\kappa>0}$ of the passive scalar equation~\eqref{eq:drift:diffusion} with velocity field $u$ displays anomalous diffusion continuously in time; i.e.,~\eqref{eq:anomaly} holds and the time dissipation measure $\mathcal{E}(dt)$ from~\eqref{eq:dissip:measure} is non-atomic. \textbf{Then, we have that $\alpha \leq \frac 13$.}
\end{conjecture}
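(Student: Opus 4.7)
The plan is a proof by contradiction: assume $\alpha > \frac{1}{3}$, and derive a contradiction with the hypothesis of continuous-in-time anomalous diffusion. The only input from the Euler equation is via the rigidity half of Onsager's conjecture for Euler (the Constantin--E--Titi/Eyink/Duchon--Robert theorem): for $\alpha > \frac{1}{3}$, the weak solution $u$ conserves total kinetic energy on $[0,1]$, and moreover the local energy flux vanishes as a space-time distribution. Everything else in the strategy must exploit this conservation by propagating it through the passive scalar dynamics.

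The first reduction is to invoke the CET-type commutator estimate recorded in the footnote to dichotomy (i): for any $\bar\alpha \in (0,1)$,
\begin{equation*}
\kappa \int_0^1 \|\nabla \theta^\kappa(t,\cdot)\|_{L^2(\TT^d)}^2 \, dt
\les
\kappa^{\frac{\alpha + 2\bar\alpha - 1}{\alpha+1}}
(1 + \|u\|_{L^\infty_t C^\alpha_x})^{\frac{2(1-\bar\alpha)}{\alpha+1}}
\|\theta^\kappa\|_{L^\infty_t C^{\bar\alpha}_x}^2 \,.
\end{equation*}
Under the anomalous diffusion hypothesis, the left side is bounded below uniformly in $\kappa$. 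So it suffices to exhibit any $\bar\alpha > \frac{1-\alpha}{2}$ for which $\{\theta^\kappa\}_{\kappa>0}$ is uniformly bounded in $L^\infty_t C^{\bar\alpha}_x$: for such $\bar\alpha$ the exponent of $\kappa$ on the right side is strictly positive, the right side vanishes as $\kappa \to 0$, and a contradiction follows. The problem has thus been reduced to a uniform regularity statement for $\theta^\kappa$ strictly above the Obukhov--Corrsin threshold, crucially using that $u$ satisfies the Euler equations.

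The main obstacle is this regularity upgrade from $\bar\alpha = \frac{1-\alpha}{2}$ to some $\bar\alpha > \frac{1-\alpha}{2}$. The threshold $\frac{1-\alpha}{2}$ is sharp for generic $C^\alpha_{x,t}$ drift fields---the constructions of~\cite{CCS22,ElLi2023} and of Theorem~\ref{thm:main} itself realize it---so any proof must be genuinely specific to Euler. My preferred approach is Lagrangian: at $\alpha > \frac 13$, conservation of energy and Kelvin-type conservation of circulation for $u$ suggest the existence of a unique, essentially measure-preserving Lagrangian flow $X(t,\cdot)$ with regularity close to $C^\alpha$; once such a flow is available, one compares $\theta^\kappa$ to $\theta_{\mathsf{in}} \circ X^{-1}(t,\cdot)$, the error being a parabolic boundary layer of vanishing size as $\kappa \to 0$, and $\theta_{\mathsf{in}} \circ X^{-1}$ inherits regularity close to $C^\alpha$ from $X^{-1}$. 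Since $\alpha > \frac{1-\alpha}{2}$ precisely when $\alpha > \frac 13$, this is strictly above the Obukhov--Corrsin threshold, closing the contradiction. A back-up strategy would be to develop a large-scale regularity theory for the advection-diffusion operator in the spirit of~\S\ref{sec:uniformholder}, and combine the Onsager rigidity identities for $u$ with an excess-decay iteration across scales to obtain a uniform H\"older bound for $\theta^\kappa$ at every dyadic scale. The central difficulty of either route---establishing a unique regular flow, or a large-scale regularity theory, for genuine Euler solutions at supercritical H\"older regularity---lies beyond current DiPerna--Lions/Ambrosio techniques (which demand $\nabla u \in L^1$), and is presumably of comparable depth to the still-open uniqueness problem for Lagrangian trajectories of Euler weak solutions above $C^{1/3}$.
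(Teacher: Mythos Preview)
The paper does not prove this statement: it is posed as an open \emph{conjecture}, and the surrounding discussion explicitly says that resolving it in either direction lies beyond current techniques. So there is no ``paper's own proof'' to compare against; at most one can compare your heuristic strategy to the authors' brief speculative remarks.

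Your reduction is sound as far as it goes: the CET-type commutator bound does reduce the problem to proving a uniform-in-$\kappa$ H\"older estimate on~$\theta^\kappa$ at some exponent~$\bar\alpha > \tfrac{1-\alpha}{2}$, and you correctly note that this must use the Euler structure in an essential way. You also honestly flag that the Lagrangian step---deducing existence, uniqueness, and~$C^\alpha$-type regularity of a flow map for~$u$ from energy conservation alone---is open and likely as hard as the conjecture itself. That assessment is accurate: energy conservation (the only consequence of Onsager rigidity you invoke) is a single scalar constraint and does not by itself yield any Lagrangian well-posedness or regularity; the passage from ``$u$ conserves energy'' to ``$u$ has a $C^\alpha$ flow'' has no known mechanism, and your appeal to ``Kelvin-type conservation of circulation'' is itself not established for rough Euler solutions. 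So the proposal is not a proof but a plausible roadmap with the decisive step missing, which you acknowledge.

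For comparison, the paper's own informal remarks suggest a rather different angle: rather than extracting regularity of~$u$'s flow and pushing it onto~$\theta^\kappa$, they imagine running the logic in reverse---using the assumed anomalous diffusion of \emph{all}~$H^1$ scalar data to ``synthesize'' information about the Euler solution, with the pressure singled out as the main obstruction. Neither viewpoint is close to a proof.
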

We expect that the method of proof for resolving Conjecture~\ref{eq:big:conj} is  more interesting than what the statement of the Conjecture yields. Indeed, if one wishes to prove that the Conjecture is \emph{false}, then one first needs to construct $C^\alpha_{x,t}$ weak solutions of the incompressible Euler equations, for some $\alpha \in (\frac 13, 1)$. This question is widely open (in the absence of artificial forcing terms). Conversely, if one is to prove that the Conjecture is \emph{true}, then one needs to synthesize weak Euler solutions from the information gained by watching what the associated advection-diffusion equation does to  arbitrary~$H^1(\TT^d)$ smooth scalar initial conditions, in the vanishing diffusivity limit; one of the clear enemies is the pressure term in the Euler system.

\section*{Acknowledgements}
S.A.~was supported by NSF grant DMS-2350340. V.V.~was in part supported by the
Collaborative NSF grant DMS-2307681 and a Simons Investigator Award.
 

\end{document}